\theoremstyle{plain}
\newtheorem{theor10}{Theorem}
\newtheorem{prop10}{Proposition}
\newtheorem{cor10}{Corollary}
\newtheorem{theor0}{Theorem}[section]
\newenvironment{theor}
  {\pushQED{\qed}\begin{theor0}}
  {\popQED\end{theor0}}
\newtheorem{lem0}[theor0]{Lemma}
\newenvironment{lem}
  {\pushQED{\qed}\begin{lem0}}
  {\popQED\end{lem0}}
\newtheorem{prop0}[theor0]{Proposition}
\newenvironment{prop}
  {\pushQED{\qed}\begin{prop0}}
  {\popQED\end{prop0}}
\newtheorem{cor0}[theor0]{Corollary}
\newenvironment{cor}
  {\pushQED{\qed}\begin{cor0}}
  {\popQED\end{cor0}}
\newtheorem{propr0}[theor0]{Property}
\newtheorem{hyp0}[theor0]{Hypothesis}
\newenvironment{hyp}
  {\pushQED{\qed}\begin{hyp0}}
  {\popQED\end{hyp0}}
\newtheorem{result0}[theor0]{Result}
\newtheorem{conj0}[theor0]{Conjecture}
\newtheorem{heur0}[theor0]{Heuristics}
\theoremstyle{definition}
\newtheorem{defin0}[theor0]{Definition}
\newenvironment{defin}
  {\pushQED{\qed}\begin{defin0}}
  {\popQED\end{defin0}}
\newtheorem{rems0}[theor0]{Remarks}
\newtheorem{ex0}[theor0]{Example}
\newtheorem{exs0}[theor0]{Examples}
\newtheorem{rem0}[theor0]{Remark}
\newtheorem{qu0}[theor0]{Question}
\newtheorem{qus0}[theor0]{Questions}
  \newtheorem{as0}[theor0]{Assumption}
\mathchardef\emptyset="001F
\numberwithin{equation}{section}
\newcommand{\ru}{r_\star}
\newcommand{\ra}{r_\diamond}
\newcommand{\rc}{r_\spadesuit}
\newcommand{\rs}{r_\clubsuit}
\newcommand{\Br}{B_\star}
\newcommand{\Ba}{B_\diamond}
\newcommand{\N}{\mathbb N}
\newcommand{\e}{\varepsilon}
\newcommand{\Log}{|\!\log\e|}
\newcommand{\calQ}{\mathcal{Q}}
\newcommand{\Cc}{\mathcal{C}}
\newcommand{\calC}{\mathcal{C}}
\newcommand{\R}{\mathbb R}
\newcommand{\Nc}{\mathcal N}
\newcommand{\loc}{{\operatorname{loc}}}
\newcommand{\Id}{\operatorname{Id}}
\newcommand{\E}{\mathbb{E}}
\newcommand{\diam}{\operatorname{diam}}
\newcommand{\step}[1]{\noindent \textit{Step} #1.}
\newcommand{\Pm}{\mathbb{P}}
\newcommand{\expec}[1]{\mathbb{E}\left[ #1 \right]}
\newcommand{\var}[1]{\mathrm{Var}\left[#1\right]}
\newcommand{\cov}[2]{\operatorname{Cov}\left[{#1};{#2}\right]}
\newcommand{\dTV}[2]{\operatorname{d}_{\operatorname{TV}}\left({#1},{#2}\right)}
\newcommand{\dWW}[2]{\operatorname{W}_2\left({#1},{#2}\right)}
\newcommand{\ura}{\underline{r}_{\diamond}}
\newcommand{\tra}{\tilde{r}_{\diamond}}
\newcommand{\pa}{p_\diamond}
\title{Quantitative homogenization for log-normal coefficients}
\author[N. Clozeau]{Nicolas Clozeau}
\author[A. Gloria]{Antoine Gloria}
\author[S. Qi]{Siguang Qi}
\address[Nicolas Clozeau]{ISTA, Klosterneuburg, Austria}
\email{nicolas.clozeau@ist.ac.at}
\address[Antoine Gloria]{Sorbonne Universit\'e, CNRS, Universit\'e de Paris, Laboratoire Jacques-Louis Lions (LJLL), F-75005 Paris, France \& Institut Universitaire de France (IUF) \& Universit\'e Libre de Bruxelles, D\'epartement de Math\'ematique, Brussels, Belgium}
\email{antoine.gloria@sorbonne-universite.fr}
\address[Siguang Qi]{Sorbonne Universit\'e, CNRS, Universit\'e de Paris, Laboratoire Jacques-Louis Lions (LJLL), F-75005 Paris, France}
\email{siguang.qi@sorbonne-universite.fr}
\begin{document}
\selectlanguage{english}

\begin{abstract}
We establish quantitative 
homogenization results for the popular log-normal coefficients. Since the coefficients are neither bounded nor uniformly elliptic, standard proofs do not apply directly. Instead, we take inspiration from the approach developed for the nonlinear setting by the first two authors and capitalize on large-scale regularity results by Bella, Fehrmann, and Otto for degenerate coefficients in order to leverage an optimal control (in terms of scaling and stochastic integrability) of oscillations and fluctuations.
\\ \\
MSC: 35R60, 35B27, 35B65, 60F05, 60H07
\\
Keywords: quantitative stochastic homogenization, log-normal coefficients, regularity theory, fluctuations
\end{abstract}

\maketitle
\tableofcontents

\section{Main results}

\subsection{Degenerate coefficients in stochastic homogenization}

Quantitative stochastic homogenization for linear elliptic PDEs with bounded and uniformly elliptic coefficients is by now well-established, either based on functional calculus and nonlinear concentration of measure \cite{GO1,GO2,Gloria-Otto-10b,GNO1,GNO2,DG1,DG2,GNO-reg,GNO-quant,DGO1,DGO2,DO-20,duerinckx2019scaling} or based on renormalization and linear concentration of measure \cite{AS,Armstrong-Mourrat-16,AKM1,GO4,AKM2,AKM-book}.
In both cases, the theory assumes non-degeneracy of the coefficients. Whereas there is no doubt the theory should extend to mildly degenerate coefficients, only quantitative homogenization on the percolation cluster \cite{AD-18,Dario} and for rigid inclusions \cite{MR4468195} has been established so far.

\medskip

In geology (and more precisely for applications to oil or water recovery, and $CO_2$ storage), permeability can range over more than 8 orders of magnitude.
Such permeability fields are often modelled using log-normal distributions (that is, they are obtained as the exponential of a Gaussian random field -- failing boundedness and uniform ellipticity). When one is interested in the behavior of the system at the scale of the correlation-length, this is the realm of expansions \`a la Karhunen-Lo\`eve, see e.~g.~\cite{MR3601011,MR4570337}.
When one is interested in the large-scale behavior of the system, this is the realm of (numerical) homogenization. 
It is not a coincidence that the very first article on numerical homogenization \cite{MR1455261} in the applied mathematics community (which triggered a long-lasting activity in multiscale modeling and simulations, e.~g.~\cite{MR2477579}) deals with log-normal coefficients.
Although the numerical analysis of such methods for random coefficients was the main motivation 
to develop a quantitative stochastic homogenization theory in  \cite{GO1,GO2,MR2991032,GNO1,Gloria-12}, the case of degenerate coefficients was not covered.
The aim of this article is to fill this gap, and extend the quantitative homogenization theory 
 based on functional calculus and nonlinear concentration of measure to log-normal coefficients. 
The results of this work were announced in the plenary talk of the second author at the SIAM conference on Mathematical Aspects of Materials Science (MS21), and extend \cite{GQ-24} to higher dimensions.

\medskip

Quantitative homogenization starts with regularity theory -- like the perturbative Meyers' estimate (which provides a higher integrability result). For bounded and uniformly elliptic coefficients, such estimates are standard and deterministic. For degenerate coefficients however, one cannot hope for a deterministic version of these estimates, and we only expect large-scale versions -- that is, such estimates only hold at a random scale onwards.
Our strategy to get around this problem is inspired by the work \cite{MR4619005} of the first two authors on the homogenization of genuinely nonlinear monotone operators. In this setting the linearized equation is an elliptic equation with coefficients that involve the nonlinear corrector gradient, and therefore are unbounded (and thus degenerate in the usual sense of the word in homogenization). In \cite{MR4619005} we established large-scale Meyers' estimates using quantitative homogenization itself (and in particular the decay of spatial averages of correctors). 
In the present work, the problem is somehow simpler because the degeneracy of the coefficients is a given datum rather than an unknown of the problem (as opposed to the integrability of the nonlinear correctors in \cite{MR4619005}). 
The main insight of the present work are the large-scale Meyers estimates we establish
in Theorem~\ref{th:Meyers} and the control of correctors in Theorem~\ref{th:bd-corr}, that we prove following the strategy laid out in \cite{MR4619005} for a nonlinear problem.
From there, we capitalize on  \cite{bella2018liouville} and on by-now standard methods 
to leverage a complete quantitative homogenization theory, that we state for completeness with precise references to the literature. In particular, we give a complete description of fluctuations of observables of the (random) solution, in the spirit of uncertainty quantification. 

\medskip

For simplicity of the exposition, we consider the coefficient field $a(x)\Id$ where $a(x)=\exp(G(x))$ is a scalar field obtained as the exponential of a Gaussian field $G$ (this specific form is not essential, and encapsulates the main difficulty of unboundedness and degeneracy). 
We further assume that this Gaussian field has integrable covariance (this is essential for the upcoming proof).
Since our strategy is based on functional calculus and nonlinear concentration, one can also treat Poissonian models. A typical example would be as follows: consider a Poisson point process, and the associated Voronoi tessellation of $\R^d$. Define the random coefficients as $a(x)\Id$, where $a(x)$ denotes the diameter of the Voronoi cell containing $x$ -- we have $\inf a = \inf a^{-1}=0$. Since $a$ satisfies a multiscale spectral gap inequality (with exponential weight) in the sense of \cite{DG1,DG2},  our strategy still holds with minor modifications (see e.g.~\cite{GNO1,GNO2} where both Gaussian and Poissonian fields are considered).

\medskip

To conclude, let us comment on the interest of multiscale functional inequalities in the sense of \cite{DG1,DG2} in this context. Quantitative stochastic homogenization requires two ingredients on the coefficient field: a rate for the convergence of spatial averages to their expectation, and concentration of measures (that is, the stochastic integrability).
Whereas (nonlinear) concentration depends little on the covariance function when the coefficient field satisfies a functional inequality, (linear) concentration degrades very fast with correlations when it is obtained by (linear) mixing conditions (such as alpha-mixing). In particular, as soon as one is interested in coefficient fields whose correlations (in a suitable mixing sense) do not decay at a super-algebraic rate, functional inequalities seem unavoidable.

\medskip

In this article we shall consider the following class of coefficient fields.
\begin{hyp}\label{hypo}
Let $G:\R^d \to \R$ be a Gaussian field with integrable covariance function $\calC$
which is $2\gamma$-H\"older continuous at 0 for some $0<\gamma<\frac12$.
We set $a(x):=\exp(G(x))$.
\end{hyp}
All the upcoming results assume Hypothesis~\ref{hypo}.

\subsection{Perturbative large-scale regularity}

We start by defining a scale at which the log-normal field behaves like a uniformly and elliptic coefficient field.
\begin{prop}\label{prop:mombd-ra}
Let $a$ be as in Hypothesis~\ref{hypo}.
Set $p_\diamond =d+ 1$. There exists a stationary $\frac18$-Lipschitz field $\ra$ such that for all $x\in\mathbb{R}^d$ and $r \ge \ra(x)$ 
\[
\tfrac1{2}\expec{a^{p_\diamond}+a^{-p_\diamond}} \le\fint_{B_r(x)} a^{p_\diamond}+a^{-p_\diamond} \le 2 \expec{a^{p_\diamond}+a^{-p_\diamond}},
\]
and which satisfies 
\begin{equation}\label{MomentNonSupEllRatio}
\expec{\exp(\frac1C \log^2(1+\ra))} \le 2,
\end{equation}
for some $C>0$.
In what follows, for all $x\in \R^d$ we set $\Ba(x):=B_{\ra(x)}(x)$.

Let $0<\e \le 1$, we define for all $R\ge 1$
\[
\rs(R):= \Big(R^{-\frac \e2} \sup_{B_R} (a+a^{-1})\Big)^2.
\]
There exists $C>0$ (depending on $\e$) such that 
\begin{equation}\label{MomentBoundEllipticityRadius}
\sup_{R\ge 1}\, \expec{\exp(\frac1C \log^2(1+\rs(R)))} \le 2.
\end{equation}
\end{prop}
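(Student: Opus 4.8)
The plan is to build $\ra$ and $\rs$ from a common scheme: both are essentially minimal scales at which a suitable local average (or supremum) of $a^{p_\diamond}+a^{-p_\diamond}$, resp. $a+a^{-1}$, is controlled, and both moment bounds reduce to Gaussian concentration for $G$ together with the integrable-covariance assumption. First I would record the elementary fact that, under Hypothesis~\ref{hypo}, for any fixed $q>0$ the field $a^{q}=\exp(qG)$ has finite moments of every order and satisfies a log-Sobolev/spectral-gap type inequality inherited from the Gaussian field $G$ with integrable covariance; in particular $\expec{a^{p_\diamond}+a^{-p_\diamond}}<\infty$ and spatial averages $\fint_{B_r(x)}(a^{p_\diamond}+a^{-p_\diamond})$ concentrate around their mean. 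The construction of $\ra$ is then the standard "coarsest good scale": set $\ura(x)$ to be (a regularized, dyadic version of) $\inf\{r\ge 1:\ \tfrac12\expec{a^{p_\diamond}+a^{-p_\diamond}}\le\fint_{B_\rho(x)}(a^{p_\diamond}+a^{-p_\diamond})\le 2\expec{a^{p_\diamond}+a^{-p_\diamond}}\ \forall \rho\ge r\}$, and then mollify on its own scale to obtain a stationary $\tfrac18$-Lipschitz $\ra\ge\ura$; Lipschitzness on its own scale is automatic from the dyadic construction, as in \cite{MR4619005,bella2018liouville}.

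The heart of the matter is the moment bound \eqref{MomentNonSupEllRatio}, i.e.\ a stretched-exponential-of-$\log^2$ tail for $\ra$. By the definition of $\ura$, the event $\{\ura(x)> r\}$ forces $\big|\fint_{B_\rho(x)}(a^{p_\diamond}+a^{-p_\diamond})-\expec{a^{p_\diamond}+a^{-p_\diamond}}\big|\ge \expec{a^{p_\diamond}+a^{-p_\diamond}}$ for some dyadic $\rho\in[1,r]$, up to the coarsening. One then controls this probability by a concentration inequality for the spatial average of $F:=a^{p_\diamond}+a^{-p_\diamond}$: since $F$ is a Lipschitz (indeed entire) function of $G$, its average over $B_\rho$ has "sensitivity" decaying like a negative power of $\rho$ (using integrability of $\calC$, averaging gains a factor $\rho^{-d/2}$ up to logarithmic corrections), so by the concentration of measure for functionals of the Gaussian field one gets $\pr{|\fint_{B_\rho}F-\expec{F}|\ge t}\le 2\exp(-c\,\rho^{d}\,t^{2}/\|F\|^2_*)$ for an appropriate weighted norm $\|\cdot\|_*$; the unboundedness of $F$ costs only a loss in the integrability exponent, handled by a multiscale functional inequality with exponential weight \`a la \cite{DG1,DG2}. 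This yields a Gaussian (in $\rho^{d/2}$) tail for the single-scale deviation, hence after union bound over dyadic scales $\le r$ a tail $\pr{\ura(x)>r}\le C\exp(-c\, r^{d})$, which is far stronger than needed: $\expec{\exp(\tfrac1C\log^2(1+\ura))}<\infty$ follows since $\log^2(1+r)\ll r^d$. The passage from $\ura$ to its mollification $\ra$ only changes constants. \emph{The step I expect to be the main obstacle} is obtaining the concentration estimate for $\fint_{B_\rho}F$ \emph{with an explicit, usable dependence on the unbounded function $F=a^{p_\diamond}+a^{-p_\diamond}$}: one must quantify how the derivative of $F(G)=\exp(p_\diamond G)+\exp(-p_\diamond G)$ interacts with the multiscale weight so that, after averaging, the resulting functional inequality still produces a genuine Gaussian (not merely stretched-exponential) single-scale tail; this is exactly where the choice $p_\diamond=d+1$ and the assumed integrability (plus $2\gamma$-H\"older regularity at $0$ for moment finiteness) of $\calC$ are used.

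For the second part, the field $\rs(R)=\big(R^{-\e/2}\sup_{B_R}(a+a^{-1})\big)^2$ is handled more directly, since it is an explicit function of $\sup_{B_R}(a+a^{-1})=\exp(\e_\pm \sup_{\pm}\sup_{B_R}G)$ with the obvious sign convention, so $\log(1+\rs(R))\le C+ 2\sup_{B_R}|G| - \e\log R$. The required uniform-in-$R$ bound \eqref{MomentBoundEllipticityRadius} then amounts to showing $\expec{\exp(\tfrac{c}{C}(\sup_{B_R}|G|-\tfrac\e2\log R)_+^2)}$ is bounded uniformly in $R\ge 1$. This is a standard maximal-inequality/Borell--TIS computation: $\sup_{B_R}|G|$ has expectation $O(\sqrt{\log R})$ (using stationarity, the $2\gamma$-H\"older continuity at $0$ which gives sample-path H\"older continuity via Kolmogorov, and a chaining/$\e$-net bound over the $\sim R^d$ unit balls covering $B_R$) and Gaussian concentration around that expectation with the $O(1)$ variance proxy $\sup\Var(G)=\calC(0)$; hence $\sup_{B_R}|G|-\tfrac\e2\log R\le (\text{Gaussian of }O(1)\text{ variance}) + (O(\sqrt{\log R})-\tfrac\e2\log R)$, and the deterministic part is $\le 0$ for $R$ large and $O(1)$ for the remaining compact range of $R$. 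Plugging this into $\exp(\tfrac1C\log^2(\cdot))$ and choosing $C$ large relative to the variance proxy gives a bounded (indeed $\le 2$ after fixing $C$) expectation uniformly in $R$, completing the proof.
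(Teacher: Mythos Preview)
Your overall architecture for $\ra$ (minimal dyadic scale, concentration for the spatial average, union bound, then $\tfrac18$-Lipschitz regularization) matches the paper's. The gap is in the concentration step: the ``genuine Gaussian single-scale tail'' you aim for, and the resulting bound $\Pm(\ura>r)\le C\exp(-cr^d)$, are not obtainable here and are far stronger than what the proposition asserts. The Malliavin derivative of $X_r:=\fint_{B_r}(a^{p_\diamond}+a^{-p_\diamond})$ is $D_zX_r=p_\diamond|B_r|^{-1}(a^{p_\diamond}-a^{-p_\diamond})(z)\mathds 1_{B_r}(z)$, itself log-normal, so the LSI moment bound reads
\[
\expec{|X_r-\E X_r|^q}^{1/q}\ \lesssim\ \sqrt q\,|B_r|^{-1/2}\,\expec{a^{p_\diamond q}}^{1/q}\ =\ \sqrt q\,|B_r|^{-1/2}\,e^{\frac{\calC(0)}2 p_\diamond^2 q},
\]
i.e.\ $\expec{|X_r-\E X_r|^q}\le r^{-dq/2}e^{cq^2}$ rather than $(C r^{-d/2})^q q^{q/2}$. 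Optimizing in $q\sim\log r$ (this is exactly what the paper does) yields only $\Pm(|X_r-\E X_r|>\delta\,\E X_r)\lesssim e^{-c\log^2 r}$, and after a union bound over dyadic scales (the relevant ones are $\rho\ge r$, not $\rho\in[1,r]$ as you wrote) one gets precisely the $\log^2$-tail \eqref{MomentNonSupEllRatio}. This is sharp: on the event $\{G\gtrsim M\text{ on a fixed unit ball}\}$, which has probability $\sim e^{-cM^2}$, one has $\fint_{B_r}a^{p_\diamond}\gtrsim r^{-d}e^{p_\diamond M}$, forcing $\ra\gtrsim e^{c'M}$. So the multiscale functional inequalities you invoke cannot rescue a Gaussian tail; the $e^{cq^2}$ loss from the unbounded derivative is intrinsic. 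Two smaller points: the choice $p_\diamond=d+1$ plays no role in this step (it is used in the reverse H\"older/Meyers argument), and the H\"older regularity of $\calC$ at $0$ is only needed for $\rs$.

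For $\rs$, your Borell--TIS route (Gaussian concentration of $\sup_{B_R}|G|$ around $\expec{\sup_{B_R}|G|}=O(\sqrt{\log R})$, then absorb $\sqrt{\log R}$ into $\tfrac\e2\log R$) is correct and arguably more direct than the paper's proof, which first reduces to $R=1$ by covering $B_R$ with $\sim R^d$ unit balls and a union bound (the factor $R^d$ is then killed by $\exp(-\tfrac1C\log^2 R^\e)$), and then controls $\sup_{B_1}a$ by a Kolmogorov-criterion moment computation based on $\expec{|a(x)-a(y)|^q}\lesssim|x-y|^{\gamma q}e^{cq^2}$. Both arguments rest on the same two inputs (variance proxy $\calC(0)$ and local H\"older regularity of $G$), so the difference is one of packaging rather than substance.
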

Based on this (which we prove in the appendix), we shall establish the following large-scale Meyers estimate.
\begin{theor}[Quenched Meyers' estimates]\label{th:Meyers}
There exists $\kappa>0$ (depending on $d$) such that for all $2 \le p \le 2+ \kappa$, and all functions $u,h$ satisfying 
in the weak sense in $\R^d$
$$
 -\nabla \cdot a \nabla u = \nabla \cdot \sqrt ah, 
$$
we have
\begin{equation}
\int_{\R^d} \Big(\fint_{\Ba(x)}a|\nabla u |^2\Big)^\frac p2 dx 
\,\lesssim_p\,  \int_{\R^d} \Big(\fint_{\Ba(x)}  |h|^2\Big)^\frac p2 dx .
\end{equation}
\end{theor}
A consequence of the proof of the Meyers estimates is the following large-scale hole-filling estimate.
\begin{cor}[Hole-filling estimate in the large]\label{Lholefilling}
There exists $0<\beta \le d$ with the following property.
Let $R \ge \ra$, and let $u$ satisfy
\[
-\nabla\cdot a\nabla u= 0 \text{ in $B_R$}.
\]
Then for all $\ra\le r \le R$, we have
\begin{equation}
\fint_{B_r}a |\nabla u|^2  \lesssim (\tfrac{R}{r})^{d-\beta}\fint_{B_R}a|\nabla u|^2  .
\label{Lholefillingesti}
\end{equation}
\end{cor}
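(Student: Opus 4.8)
The plan is to extract the hole-filling inequality from the same ingredients that drive the large-scale Meyers estimate of Theorem~\ref{th:Meyers}, exactly as in the classical (deterministic) setting, but localized to scales above the ellipticity radius $\ra$. First I would recall the deterministic hole-filling principle: for an $a$-harmonic function $u$ on $B_R$ one uses the Caccioppoli inequality on annuli to compare the weighted energy on $B_r$ with that on $B_{2r}\setminus B_r$, obtaining (after adding $\theta$ times the inner energy to both sides) a quantitative improvement
\[
\fint_{B_r} a|\nabla u|^2 \;\le\; \Big(\tfrac{1}{1+\eta}\Big)\, \tfrac{|B_{2r}|}{|B_r|}\,\fint_{B_{2r}} a|\nabla u|^2
\]
for some $\eta>0$ depending only on the local ellipticity contrast. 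The point is that on dyadic annuli at scales $\ge \ra$, Proposition~\ref{prop:mombd-ra} guarantees that the averaged coefficient and its inverse are comparable to the deterministic constants $\expec{a^{\pa}+a^{-\pa}}$, so the Caccioppoli/hole-filling constant $\eta$ can be chosen uniform in the realization (this is precisely the mechanism already used to prove Theorem~\ref{th:Meyers}, where the higher integrability exponent $\kappa$ is produced from a reverse Hölder inequality obtained the same way). Iterating over $\lceil \log_2(R/r)\rceil$ dyadic scales converts the multiplicative gain per scale into the power law $(R/r)^{d-\beta}$ with $\beta := \log_2(1+\eta)>0$, which one checks satisfies $0<\beta\le d$.

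Concretely, the key steps in order are: (i) fix $r$ with $\ra \le r \le R$ and a dyadic chain $r \le 2r \le \dots \le R$ (up to harmless truncation at the endpoint); (ii) on each annulus $B_{2^{k+1}r}\setminus B_{2^k r}$, since $2^k r \ge \ra$, invoke Proposition~\ref{prop:mombd-ra} to bound $\fint a$ from above and $\fint a^{-1}$ from above on the relevant balls by $2\expec{a^{\pa}+a^{-\pa}}$, hence derive a Caccioppoli inequality with deterministic constant; (iii) run the standard hole-filling absorption argument to get the per-scale contraction factor $(1+\eta)^{-1}$ with $\eta$ depending only on $d$ and $\expec{a^{\pa}+a^{-\pa}}$; (iv) compose the estimates across the chain and collect the volume factors $|B_{2^{k+1}r}|/|B_{2^k r}| = 2^d$ into $(R/r)^d$, while the contraction factors collect into $(R/r)^{-\log_2(1+\eta)}$; (v) set $\beta = \log_2(1+\eta)$, noting $\beta \le d$ can be arranged (if the raw constant exceeds $d$, simply replace it by $d$ and weaken the estimate). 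Since all of steps (ii)–(iv) were already carried out inside the proof of Theorem~\ref{th:Meyers}, the corollary is essentially a bookkeeping exercise on top of it — this is why it is stated as a corollary of the proof rather than of the statement.

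The main obstacle is purely the passage from the pointwise coefficient $a$ (which is unbounded and can degenerate) to a \emph{usable} Caccioppoli inequality: one cannot simply write $\int_{B_\rho}|\nabla u|^2 \lesssim \rho^{-2}\int_{B_{2\rho}\setminus B_\rho} |u-c|^2$ with a deterministic constant, because the ellipticity contrast of $a$ is not controlled pointwise. The resolution — and the only subtle point — is that we never need a pointwise bound: testing the equation with $\xi^2(u-c)$ for a cutoff $\xi$ supported in $B_{2\rho}$ produces $\int a\xi^2|\nabla u|^2 \lesssim \int a |\nabla\xi|^2 (u-c)^2$, and then on the right one estimates $\int_{B_{2\rho}\setminus B_\rho} a|\nabla\xi|^2(u-c)^2 \le \|\nabla\xi\|_\infty^2 \big(\fint_{B_{2\rho}} a\big)|B_{2\rho}| \,\fint_{B_{2\rho}\setminus B_\rho}(u-c)^2$, so only the \emph{average} of $a$ enters — and that is exactly what Proposition~\ref{prop:mombd-ra} controls once $\rho \ge \ra$. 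The remaining work (Poincaré on the annulus, choice of $c$ as the annular average, the absorption step) is routine and dimension-only, so the $\beta$ that emerges depends on $d$ through the fixed constant $\expec{a^{\pa}+a^{-\pa}}$ and the Poincaré/Caccioppoli constants, and is independent of the realization.
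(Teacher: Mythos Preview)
Your overall strategy---classical hole-filling via weighted Caccioppoli on annuli, absorption, and dyadic iteration---is in principle workable and is a genuinely different route from the paper's, but your handling of the weight contains a real error. The displayed inequality
\[
\int_{B_{2\rho}\setminus B_\rho} a|\nabla\xi|^2(u-c)^2 \;\le\; \|\nabla\xi\|_\infty^2 \Big(\fint_{B_{2\rho}} a\Big)|B_{2\rho}| \,\fint_{B_{2\rho}\setminus B_\rho}(u-c)^2
\]
is false: one cannot replace $a$ by its spatial average inside an integral against $(u-c)^2$. And even if you repair this step, after the unweighted Poincar\'e on the annulus you land on $\int_{A_\rho}|\nabla u|^2$, not $\int_{A_\rho}a|\nabla u|^2$, so the absorption does not close as written. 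The fix is exactly the chain already executed in the proof of Lemma~\ref{lem:rev-Hol}: apply H\"older with exponent $p_\diamond$ to $\int a(u-c)^2$, use Poincar\'e--Sobolev (not plain Poincar\'e) to reach $(\fint|\nabla u|^s)^{1/s}$ for some $s<2$, then H\"older once more with $a^{-p_\diamond}$ to reinsert the weight, and finally Jensen ($s<2$) to bound by $(\fint a|\nabla u|^2)^{1/2}$ on the annulus. Both moment factors are controlled by Proposition~\ref{prop:mombd-ra} since $\rho\ge\ra$, and then the absorption and iteration proceed as you describe. So the idea survives, but the ``only the average of $a$ enters'' shortcut does not.

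The paper avoids rebuilding this weighted chain on annuli by reading hole-filling directly off the Gehring output~\eqref{e.pr-mey-5}. Using the covering relations~\eqref{e.partition} and Jensen one has, for $\ra\le r\le R/4$,
\[
\int_{B_r}a|\nabla u|^2 \;\lesssim\; r^{d(1-\frac1\gamma)}\Big(\int_{B_{R/2}}\Big(\fint_{\Ba(x)}a|\nabla u|^2\Big)^\gamma dx\Big)^{1/\gamma},
\]
and then \eqref{e.pr-mey-5} with $h\equiv 0$ converts the $L^\gamma$ average back to $L^1$ at cost $R^{d(\frac1\gamma-1)}$, yielding~\eqref{Lholefillingesti} with $\beta=d(1-\tfrac1\gamma)$ in one stroke. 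This exploits the standard equivalence between higher integrability and hole-filling: the exponent $\gamma>1$ produced by Gehring already encodes the per-scale gain, so no separate annular iteration is needed.
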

Since by Proposition~\ref{prop:mombd-ra} the stationary field $\ra$ has finite super-algebraic moments, one may upgrade the above into  annealed estimates, following an argument of \cite{DO-20}.
\begin{theor}[Annealed Meyers' estimate]\label{th:annealedmeyers}
There exists $\kappa>0$ such that for all  $|p-2|,|m-2| \le \kappa$ and $0< \delta \le \frac12$, and all functions $u,h$ satisfying 
in the weak sense in $\R^d$
$$
-\nabla \cdot a \nabla u = \nabla \cdot \sqrt a h, 
$$
we have
\begin{equation*}
\int_{\R^d} \expec{\Big(\fint_{B(x)}  a|\nabla u|^2\Big)^\frac p2}^\frac mp dx \,
\lesssim_p\, \delta^{-\frac14} |\log \delta |^\frac12  \int_{\R^d} \expec{\Big(\fint_{B(x)}  |h|^2\Big)^\frac {p(1+\delta)}2 }^\frac m{p(1+\delta)} dx.
\end{equation*}
\end{theor}
These results are proved in Section~\ref{sec:Meyers}.

\subsection{Minimal radius and bounds on correctors}

Based on these perturbative large-scale regularity estimates and a buckling argument making use of the CLT scaling, we establish bounds on correctors, which are defined in this degenerate context in \cite[Lemma~1]{bella2018liouville}.
\begin{lem}
There exist two tensor fields $\{\phi_i\}_{1\le i \le d}$ and $\{\sigma_{ijk}\}_{1\le i,j,k\le d}$
with the following properties. The gradient fields are stationary, have bounded moments, and are of vanishing expectation: $\expec{\nabla \phi_i}=\expec{\nabla \sigma_{ijk}}=0$ and for all $1\le p<\infty$, 
\[
\sum_{i=1}^d \expec{a |\nabla \phi_i|^2}+\sum_{i=1}^d \expec{|\nabla \phi_i|^\frac{2p}{p+1}}^\frac{p+1}{2p} + \sum_{i,j,k=1}^d \expec{|\nabla \sigma_{ijk}|^\frac{2p}{p+1}}^\frac{p+1}{2p} \,\lesssim \, \expec{a^p+a^{-p}}.
\]
The field $\sigma$ is skew-symmetric in its last two indices, that is, 
\[
\sigma_{ijk}=-\sigma_{ikj}.
\]
Furthermore, for $\expec{\cdot}$-a.e. $a$ we have
 \begin{equation}\label{CorectorEquation}
-\nabla\cdot a (\nabla \phi_i+e_i)=0,
\end{equation}
and\footnote{the divergence is taken wrt the last index}
\[
q_i:=a(\nabla\phi_i+e_i)=\expec{q_i} e_i +\nabla \cdot \sigma_i,
\]
together with the gauge equation
 \begin{equation}\label{GaugeEquation}
-\triangle \sigma_{ijk}=[\nabla\times q_i]_{jk}:=\partial_{j} q_{ik}-\partial_k q_{ij}.
\end{equation}
Finally,  the homogenized coefficient $\bar a e_i:=\expec{a(\nabla\phi_i+e_i)}$ is uniformly elliptic in the sense that for 
$K:=\expec{a^{d+1}+a^{-(d+1)}}$ and for all $\xi \in \R^d$,
\[
\frac1K |\xi|^2 \le \xi \cdot \bar a \xi \quad \text{ and }\quad |\bar a \xi|\le K|\xi|.
\]
\end{lem}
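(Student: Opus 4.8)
The plan is to follow the construction of extended correctors for degenerate stationary ergodic coefficients as in \cite[Lemma~1]{bella2018liouville}, but to upgrade the moment bounds using the large-scale regularity estimates established above. First I would construct $\nabla\phi_i$ as the unique (up to irrelevant additive constants for $\phi_i$ itself) stationary gradient field of vanishing expectation solving the corrector equation \eqref{CorectorEquation}, via the standard Lax--Milgram argument in the Hilbert space of stationary, mean-zero gradient fields $F$ with $\expec{a|F|^2}<\infty$: the bilinear form $(F,H)\mapsto \expec{a\,F\cdot H}$ is bounded and coercive on this space, and $H\mapsto-\expec{a\,e_i\cdot H}$ is a bounded linear functional by Cauchy--Schwarz using $\expec{a}<\infty$ (Hypothesis~\ref{hypo}). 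Testing the equation with $\nabla\phi_i$ gives the energy identity $\expec{a|\nabla\phi_i|^2}\le\expec{a}$, which already yields the first term in the claimed bound; ergodicity gives the $L^2_{\mathrm{loc}}$ sublinearity of $\phi_i$ needed to make sense of the corrector. The skew-symmetric potential $\sigma_{ijk}$ is then defined through the gauge equation \eqref{GaugeEquation}: since $q_i-\expec{q_i}e_i$ is a stationary, mean-zero, divergence-free field (divergence in the last index, by \eqref{CorectorEquation}), its curl components $[\nabla\times q_i]_{jk}$ are stationary mean-zero, and one solves $-\triangle\sigma_{ijk}=[\nabla\times q_i]_{jk}$ for a stationary gradient field $\nabla\sigma_{ijk}$ of vanishing expectation; skew-symmetry $\sigma_{ijk}=-\sigma_{ikj}$ is built in from the antisymmetry of the curl, and the identity $q_i=\expec{q_i}e_i+\nabla\cdot\sigma_i$ follows by checking that both sides have the same curl and divergence and invoking uniqueness of stationary potentials.

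The heart of the matter — and the step where the hypotheses of the paper are genuinely used — is upgrading these $L^2(\expec{\cdot})$ bounds to the stated $L^{2p/(p+1)}$-type bounds for all $p<\infty$ with right-hand side $\expec{a^p+a^{-p}}$. For $\nabla\phi_i$, I would apply the annealed Meyers' estimate of Theorem~\ref{th:annealedmeyers} to the corrector equation written in the form $-\nabla\cdot a\nabla\phi_i=\nabla\cdot(a e_i)=\nabla\cdot(\sqrt a\cdot\sqrt a e_i)$, i.e. with $h:=\sqrt a\,e_i$: since $\expec{(\fint_{B(x)}|h|^2)^{q/2}}=\expec{(\fint_{B(x)}a)^{q/2}}\lesssim\expec{a^q}$ by stationarity and Jensen, and since $\fint_{\Ba(x)}a|\nabla\phi_i|^2\gtrsim$ a local average of $|\nabla\phi_i|^2$ on $B(x)$ (because $a^{-1}$ is controlled on $\Ba(x)$ by Proposition~\ref{prop:mombd-ra} and $\ra$ has super-algebraic moments by \eqref{MomentNonSupEllRatio}), a Hölder argument trading the small Meyers loss $\delta$ against the super-algebraic integrability of $\ra$ converts this into $\expec{|\nabla\phi_i|^{2p/(p+1)}}^{(p+1)/(2p)}\lesssim\expec{a^p+a^{-p}}^{1/2}$, with the exponent $2p/(p+1)<2$ being exactly the one dictated by the CLT scaling and the buckling in the annealed estimate. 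For $\nabla\sigma_{ijk}$, the gauge equation is a constant-coefficient Poisson equation, so Calderón--Zygmund theory gives $\expec{|\nabla\sigma_{ijk}|^q}\lesssim\expec{|q_i-\expec{q_i}e_i|^q}\lesssim\expec{(a|\nabla\phi_i+e_i|)^q}$; splitting via Hölder between a high power of $a$ and a high power of $|\nabla\phi_i+e_i|$ and feeding in the bound just obtained for $\nabla\phi_i$ yields the corresponding estimate for $\sigma$ (one must be slightly careful about which exponents close — again the $2p/(p+1)$ shape is what makes the bookkeeping consistent).

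Finally, for the uniform ellipticity of $\bar a$: lower bound, $\xi\cdot\bar a\xi=\expec{a|\nabla\phi_\xi+\xi|^2}\ge\expec{a}^{-1}\expec{\nabla\phi_\xi+\xi}^2\cdot|\xi|^2$ is the wrong direction, so instead one uses the dual/complementary-energy characterization: $\xi\cdot\bar a^{-1}\xi=\expec{a^{-1}|\,\cdot\,|^2}$-type minimization over stationary divergence-free fields with prescribed average, giving $\xi\cdot\bar a^{-1}\xi\le\expec{a^{-1}}|\xi|^2$ and hence $\xi\cdot\bar a\xi\ge\expec{a^{-1}}^{-1}|\xi|^2\ge K^{-1}|\xi|^2$; the upper bound $|\bar a\xi|\le K|\xi|$ follows from $|\bar a\xi|\le\expec{a|\nabla\phi_\xi+\xi|^2}^{1/2}\expec{a}^{1/2}\le\expec{a}|\xi|\le K|\xi|$ via the energy identity and Cauchy--Schwarz, using $\expec{a}\le\expec{a^{d+1}+a^{-(d+1)}}=K$. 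The main obstacle I anticipate is purely the exponent bookkeeping in the second step: arranging the Hölder splittings so that the annealed Meyers loss, the super-algebraic moments of $\ra$, and the target exponents $2p/(p+1)$ all fit together without circularity; everything else is a faithful transcription of the degenerate-corrector construction of \cite{bella2018liouville} combined with the regularity theory already proved above.
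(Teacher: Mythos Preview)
The paper does not prove this lemma at all: it is quoted verbatim from \cite[Lemma~1]{bella2018liouville}, and the moment bounds there are obtained by a single Hölder inequality on top of the energy identity, with no regularity theory. The exponent $\tfrac{2p}{p+1}<2$ is chosen precisely so that this works: writing $|\nabla\phi_i|^{2p/(p+1)}=(a|\nabla\phi_i|^2)^{p/(p+1)}a^{-p/(p+1)}$ and applying Hölder with exponents $(\tfrac{p+1}{p},p+1)$ gives
\[
\expec{|\nabla\phi_i|^{\frac{2p}{p+1}}}^{\frac{p+1}{2p}}\le\expec{a|\nabla\phi_i|^2}^{\frac12}\expec{a^{-p}}^{\frac1{2p}}\le\expec{a}^{\frac12}\expec{a^{-p}}^{\frac1{2p}},
\]
the last step being the energy bound. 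For $\sigma$ one uses $L^{2p/(p+1)}$-boundedness of Riesz transforms on the probability space applied to the gauge equation, then the same Hölder splitting $|q_i|^{2p/(p+1)}=(a|\nabla\phi_i+e_i|^2)^{p/(p+1)}a^{p/(p+1)}$. That is the whole argument.

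Your route through the annealed Meyers estimate has a genuine gap and is also aimed in the wrong direction. Theorem~\ref{th:annealedmeyers} is an $\int_{\R^d}$-estimate for solutions with square-integrable data; the corrector equation has stationary right-hand side $h=\sqrt a\,e_i$, so both sides of that estimate are infinite and the theorem does not apply as stated. More to the point, Meyers improves integrability \emph{above} $2$, whereas the lemma asks only for exponents strictly \emph{below} $2$; the shape $\tfrac{2p}{p+1}$ should be read as a cue to go down from the weighted $L^2$ energy via Hölder, not up via a reverse Hölder. Your construction of $(\phi,\sigma)$ via Lax--Milgram and the gauge equation, and your argument for the ellipticity of $\bar a$, are correct and match the cited reference.
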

An output of \cite[Lemma~1]{bella2018liouville} is that $\nabla \phi_i$ and $\nabla \sigma_{ijk}$ are uniquely defined.
We are in position to introduce the minimal radius.
\begin{defin}\label{DefRstarStatement}
Recall $p_\diamond = d+1$.
We define the stationary field $\tilde \ru:\R \to \R_+$ via
\begin{equation}\label{Def:MinimalRadius}
\tilde \ru:x\mapsto \inf_{r \ge \ra(x)} \Big\{\forall \rho \ge r, \, \frac1\rho\Big( \fint_{B_\rho}  \Big|(\phi,\sigma)-\fint_{B_\rho}(\phi,\sigma)\Big|^\frac{2p_\diamond}{p_\diamond-1}\Big)^\frac{p_\diamond-1}{2p_\diamond}\le \frac1C \Big\},
\end{equation}
and we denote by $\ru$ the smallest $\frac18$-Lipschitz field larger or equal to $\tilde \ru$
(which is also stationary).
In what follows, for all $x\in \R^d$ we set $\Br(x):=B_{\ru(x)}(x)$.
\end{defin}
Our first quantitative result is the following bound on the growth of the correctors.
\begin{theor}\label{th:bd-corr}
For all $x\in \R^d$, 
\begin{equation}\label{MomentPhiItself}
\Big(\fint_{B(x)} \Big|(\phi,\sigma)-\fint_{B(0)}(\phi,\sigma)\Big|^\frac{2p_\diamond}{p_\diamond-1}\Big)^\frac{p_\diamond-1}{2p_\diamond}  \,\le\, C_x \mu_d(|x|),
\end{equation}
where
\begin{equation}\label{mud}
\mu_d(t):=
\left\{
\begin{array}{rcl}
\sqrt{t+1} &:&d=1,
\\
\log(t+2)^\frac12&:&d=2,
\\
1 &:&d>2,
\end{array}
\right.
\end{equation}
and where $C_x$ is a random variable that satisfies for some $0<C<\infty$
\begin{equation}\label{StoInteConstant}
\expec{\exp(\frac1{C}\log^2(1+C_x))}\le 2
\end{equation}
\end{theor}
We also have the following control of the minimal radius.
\begin{theor}\label{th:min-rad}
There exists $0<C<\infty$ such that the minimal radius $\ru$ satisfies
\[
\expec{\exp(\frac1C \log^2(1+\ru))} \le 2.
\]
\end{theor}
These results are proved in Section~\ref{sec:corr}.
As shown in dimension $d=1$ in \cite{GQ-24}, the stochastic integrability in the above results is optimal.

\subsection{Non-perturbative large-scale regularity}

The following non-perturbative large-scale regularity results are post-processings of \cite{bella2018liouville}
using Theorem~\ref{th:min-rad}. (See Section~\ref{sec:LS} for the necessary adaptations.)
\begin{prop}[Large-scale Schauder estimates]\label{SchauderTheoryLarge}
If $u,f,g$ satisfy in the weak sense in $B_R$ for some $R \ge \ru$
$$
-\nabla \cdot a (\nabla u+g) = \nabla \cdot h,
$$
then we have for all $0<\alpha <1$
\begin{multline*}
\sup_{r \in [\ru,R]} \fint_{B_r} (\nabla u +g) \cdot a (\nabla u+g) \,\lesssim\,
\fint_{B_R}  (\nabla u +g) \cdot a (\nabla u+g) 
\\
+\sup_{r \in [\ru,R]} \big(\tfrac R r\big)^{2\alpha} \fint_{B_r} \Big( \big(g-\fint_{B_r}g\big) \cdot a \big(g-\fint_{B_r}g\big)+\big(h-\fint_{B_r}h\big) \cdot a^{-1} \big(h-\fint_{B_r}h\big)\Big).
\end{multline*}
In particular, for $g=h=0$, we have the following mean-value property for $a$-harmonic functions
(or large-scale Lipschitz property): For all $\ru\leq r\leq R$,
\[
\fint_{B_r}\nabla u\cdot a\nabla u\lesssim \fint_{B_R}\nabla u\cdot a\nabla u.\]
\end{prop}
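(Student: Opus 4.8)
The plan is to obtain Proposition~\ref{SchauderTheoryLarge} as a routine post-processing of the large-scale regularity theory developed in \cite{bella2018liouville}, with Theorem~\ref{th:min-rad} playing the role of the "good" minimal radius $\ru$ above which the coefficient field $a$ behaves like a uniformly elliptic one. The starting point is \cite[Lemma~1]{bella2018liouville} together with the sublinearity of $(\phi,\sigma)$ encoded in Definition~\ref{DefRstarStatement}: for $r \ge \ru$, the rescaled oscillation of $(\phi,\sigma)$ on $B_r$ is controlled by a small absolute constant. This is precisely the hypothesis under which \cite{bella2018liouville} proves a $C^{1,\alpha}$-type excess-decay estimate for $a$-harmonic functions at scales above the minimal radius. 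First I would record the basic excess-decay / Campanato iteration from \cite{bella2018liouville}: there is $\theta \in (0,1)$ and, for any $\alpha \in (0,1)$, a constant $C_\alpha$ such that if $-\nabla\cdot a\nabla u = 0$ in $B_R$ with $R \ge \ru$, then the "harmonic excess" $\mathrm{Exc}(r) := \inf_{\xi \in \R^d} \fint_{B_r}(\nabla u - \xi - \nabla(\xi\cdot\phi))\cdot a (\nabla u - \xi - \nabla(\xi\cdot\phi))$ decays like $\mathrm{Exc}(\theta^k R) \lesssim \theta^{2\alpha k}\,\mathrm{Exc}(R)$ down to the scale $\ru$, and that the energy $\fint_{B_r}\nabla u\cdot a\nabla u$ is comparable to $|\xi_r|^2$ where $\xi_r$ is the optimal slope on $B_r$; combined with the fact that $|\xi_r - \xi_R|$ is summable in the dyadic decomposition, this yields the mean-value property $\fint_{B_r}\nabla u\cdot a\nabla u \lesssim \fint_{B_R}\nabla u\cdot a\nabla u$ for $\ru \le r \le R$. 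This is the $g=h=0$ case; it is essentially a verbatim transcription of \cite{bella2018liouville}, the only new input being that our $\ru$ (from Theorem~\ref{th:min-rad}) satisfies the defining sublinearity property of their minimal radius.

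For the general inhomogeneous statement with $g,h\neq 0$, I would run the standard perturbation argument: on each ball $B_r$ with $\ru \le r \le R$, compare $u$ to the $a$-harmonic function $w$ with the same boundary data on $B_r$, so that $-\nabla\cdot a\nabla(u-w) = \nabla\cdot(ag + h)$ in $B_r$ with $u-w \in H^1_0(B_r)$. Testing with $u-w$ and using Cauchy–Schwarz (in the weighted $L^2$ inner product, with weight $a$ and $a^{-1}$ respectively for the two source terms, which is why $g$ is measured against $a$ and $h$ against $a^{-1}$), and subtracting mean values of $g$ and $h$ freely since $u-w$ has zero average gradient in the appropriate sense, gives $\fint_{B_r}\nabla(u-w)\cdot a\nabla(u-w) \lesssim \fint_{B_r}(g-\fint g)\cdot a(g-\fint g) + \fint_{B_r}(h-\fint h)\cdot a^{-1}(h-\fint h)$. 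Feeding this error term into the excess-decay for $w$ and summing the geometric-type series over dyadic scales between $\ru$ and $R$ — the decay $\theta^{2\alpha k}$ beats the at-worst-doubling growth of the oscillation terms, producing the $(R/r)^{2\alpha}$ loss in the statement — yields the claimed estimate. The triangle-inequality bookkeeping between $\fint (\nabla u+g)\cdot a(\nabla u+g)$ and $\fint\nabla u\cdot a\nabla u$ plus lower-order terms is routine.

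The main (mild) obstacle is not conceptual but one of transcription and hypothesis-matching: one must check that the minimal radius $\ru$ we introduced in Definition~\ref{DefRstarStatement} — built out of the $\ell^{\frac{2p_\diamond}{p_\diamond-1}}$-averaged sublinearity of $(\phi,\sigma)$ and the ellipticity-radius field $\ra$ — is \emph{at least as strong} as (i.e.\ dominates, up to constants) the minimal radius used in \cite{bella2018liouville}, so that all their large-scale statements apply with our $\ru$; this is where the precise exponent $\frac{2p_\diamond}{p_\diamond-1}$ and the role of $\ra$ (ensuring $a$ and $a^{-1}$ are $L^{p_\diamond}$-close to constants on $\Ba$) are used. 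One also needs the Caccioppoli and energy-comparison estimates in the weighted/degenerate setting, but these are available from the hole-filling Corollary~\ref{Lholefilling} and the $a$-harmonic coordinates of \cite{bella2018liouville}. Since everything below scale $\ru$ is simply not asserted, no genuine ellipticity is needed there, and the proof is a clean black-box application; I would therefore present it concisely in Section~\ref{sec:LS}, citing \cite{bella2018liouville} for the excess-decay machinery and Theorem~\ref{th:min-rad} for the control of $\ru$.
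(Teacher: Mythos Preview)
Your proposal is correct and follows essentially the same route as the paper: both treat the result as a black-box post-processing of the excess-decay machinery of \cite{bella2018liouville}, with the only genuine check being that the minimal radius $\ru$ of Definition~\ref{DefRstarStatement} is strong enough to trigger that machinery. The paper's write-up is even terser than yours---it records the excess-decay estimate with the $g,h$ oscillation terms already built in (so there is no separate perturbation-against-an-$a$-harmonic-comparison step), cites \cite[Steps~1--2, p.~135--136]{GNO-reg} for the passage from excess decay to the Lipschitz bound, and isolates as the one point requiring care the \emph{non-degeneracy of the tilted correctors} $\fint_{B_\rho}(\nabla\phi_\xi+\xi)\cdot a(\nabla\phi_\xi+\xi)\gtrsim|\xi|^2$ for $\rho\ge\ru$; this is precisely the comparability of energy and $|\xi_r|^2$ that you invoke, and its proof is where the specific exponent $\frac{2p_\diamond}{p_\diamond-1}$ and the condition $\ru\ge\ra$ are used.
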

Based on the above Lipschitz estimate, \cite[Corollary~4]{GNO-reg} directly yields large-scale Calder\'on-Zygmund estimates.
\begin{theor}[Quenched Calder\'on-Zygmund estimates]\label{th:quenchedCZ}
For all $1<p <\infty$ and for all functions $u,g,h$ satisfying 
in the weak sense in $\R^d$
$$
-\nabla \cdot a (\nabla u+g)=\nabla \cdot h, 
$$
we have
\begin{equation}
\int_{\R^d}  \Big(\fint_{\Br(x)}  \nabla u \cdot a \nabla u \Big)^\frac p2  dx 
\,
\lesssim_{p }\, \int_{\R^d} \Big(\fint_{\Br(x)} g\cdot a g + h \cdot a^{-1} h \Big)^\frac {p}2 dx.
\end{equation}
\end{theor}
As in \cite{DO-20}, one can turn the quenched CZ estimates into the following annealed CZ estimates.
\begin{theor}[Annealed Calder\'on-Zygmund estimates]\label{th:annealedCZ}
For all $1<p,m<\infty$ and $0< \delta \le \frac12$, and for all functions $u,g,h$ satisfying 
in the weak sense in $\R^d$
$$
-\nabla \cdot a (\nabla u+g)=\nabla \cdot h, 
$$
we have
\begin{multline}
\int_{\R^d} \expec{\Big(\fint_{B(x)} a |\nabla u|^2\Big)^\frac p2}^\frac mp dx 
\\
\lesssim_{p,m}\, \delta^{-\frac14} |\log \delta |^\frac12  \int_{\R^d} \expec{\Big(\fint_{B(x)} a|g|^2 + a^{-1}|h|^2\Big)^\frac {p(1+\delta)}2 }^\frac m{p(1+\delta)} dx.
\end{multline}
\end{theor}

\subsection{Quantitative homogenization}

We now have all the necessary tools to make homogenization quantitative.
Let $f\in L^2(\R^d)^d$, and for all $\e>0$, consider the unique Lax-Milgram solution\footnote{Since $a$ is degenerate, there is a small approximation argument needed to establish existence -- see e.g.~Section~\ref{sec:corr}.} $u_\e \in \dot H^1(\R^d)$ of
\begin{equation}\label{e.eq-ueps}
-\nabla \cdot a(\tfrac \cdot \e) \nabla u_\e = \nabla \cdot f,
\end{equation}
as well as the unique Lax-Milgram solution $\bar u \in \dot H^1(\R^d)$ of
the homogenized problem
\begin{equation}\label{e.eq-uhom}
-\nabla \cdot \bar a \nabla \bar u = \nabla \cdot f.
\end{equation}
Our first result characterizes oscillations of $u_\e$ in a strong norm in form of a quantitative two-scale expansion result.
\begin{theor}
For all $\e>0$, $v \in L^2_\loc(\R^d)$, and $x\in \R^d$, set $S_\e(v)(x)= \fint_{B_\e(x)} v$,
and define the two-scale expansion $\bar u_\e^{2s}$ of $u_\e$ as
\[
\bar u_\e^{2s}:= S_\e(\bar u)+\e \phi_i(\tfrac \cdot \e)  S_\e(\partial_i \bar u).
\]
Then for all $p\ge  1$ and $q\ge 1$ we have
\begin{multline*}
\expec{\Big(\int_{\R^d} \Big(\fint_{B_\e(x)} a|\nabla (u_\e-\bar u_\e^{2s})|^2\Big)^\frac p2 dx\Big)^q }^\frac1{pq} \\
\lesssim\, \e \mu_d(\tfrac 1\e) \Big(\int_{\R^d} \mu_d(|x|)^p \Big(\fint_{B_\e(x)} |f(x)|^2\Big)^\frac p2 dx\Big)^\frac1p,
\end{multline*}
where $\mu_d$ is defined in \eqref{mud}.
\end{theor}
The proof is the same as for \cite[Proposition~1]{GNO-quant} based on Theorem~\ref{th:annealedCZ} rather than \cite[Corollary~4]{GNO-reg}.

\medskip

The second set of results quantifies fluctuations of $u_\e$.
More precisely, we study the scaling limits of observables of the field $\nabla u_\e$ and flux $a(\frac \cdot \e)\nabla u_\e$ of the solution. To this aim, we recall the definition of the standard homogenization commutator $\Xi$ and of the commutator of the solution $\Xi_\e(f)$, as introduced in \cite{DGO1,DGO2}.
\begin{defin}
The standard homogenization commutator is the second-order tensor defined for all $1\le i \le d$ by
\[
[\Xi]_i:=(a-\bar a)(\nabla \phi_i+e_i).
\]
The homogenization commutator of the solution $u_\e$ of \eqref{e.eq-ueps}
is defined by
\[
\Xi_\e(f):= (a(\tfrac \cdot \e)-\bar a)\nabla u_\e.
\]
\end{defin}
Our first result is a quantitative two-scale expansion at the level of observables of commutators
in the fluctuation scaling (the so-called pathwise theory of fluctuations),
where we understand an observable as a local average with a test function.
\begin{theor}\label{th:pathwise}
Let $g \in L^2(\R^d)^d$, and denote by $\bar v$ the solution of \eqref{e.eq-uhom} with $f$ replaced by $g$ (and $\bar a$ by its transpose matrix, here $\bar a'=\bar a$).
On the one hand, $\var{\e^{-\frac d2} \int_{\R^d} g\cdot \Xi_\e(f)} \lesssim 1$.
On the other hand, for all $p \ge1$,
\begin{multline*}
\expec{\Big( \e^{-\frac d2} \Big|\int_{\R^d} g\cdot (\Xi_\e(f)-\expec{\Xi_\e(f)}) - \nabla \bar v \cdot \Xi(\tfrac \cdot\e)\nabla \bar u\Big|\Big)^p}^\frac 1p
\\
\lesssim\, \e \mu_d(\tfrac 1\e) \|f \|_{L^4(\R^d,\mu_d^2)}\|g \|_{L^4(\R^d,\mu_d^2)},
\end{multline*}
where $\mu_d$ is defined in \eqref{mud}, and $L^4(\R^d,\mu_d^2)$ is the weighted space with measure $\mu_d^2(x)dx$.
\end{theor}
This result follows from \cite{DO-20,DGO2} using Theorems~\ref{th:bd-corr} and~\ref{th:annealedCZ}. 
A direct post-processing of Theorem~\ref{th:pathwise} allows to recover corresponding results for the field and the flux of the solution (the main two quantities of interest from a physical viewpoint), see \cite[(1.10) \& (1.11)]{DGO2}.

\medskip

To complete the analysis of fluctuations of observables, it remains to investigate the scaling limit of the homogenization commutator. In the case of integrable covariance, the limit is a colored noise, which  follows from the proofs of \cite{DO-20,duerinckx2019scaling} using Theorems~\ref{th:bd-corr} and~\ref{th:annealedCZ}, on top of Malliavin calculus.
We start by a convenient strengthening of the integrability of the covariance function in Hypothesis~\ref{hypo}:
\begin{hyp}\label{hypo:strong}
The covariance function $\calC$ can be decomposed as $\calC=\calC_0\ast \calC_0$ where $\calC_0$ satisfies for some $\beta>d$ \footnote{Note that this decay assumption for $\calC_0$ implies the integrability of $\calC$.}
\begin{equation*}
|\calC_0(x)|\,\le\,C_0(1+|x|)^{-\frac12(d+\beta)},
\end{equation*}
and its Fourier transform $\hat \calC$ is positive almost everywhere.
\end{hyp}
The following quantitative central limit theorem combines the results of \cite[Proposition~3.1, Corollary~4.5, and Theorem~1(ii)]{duerinckx2019scaling}.
\begin{theor}
Assume Hypotheses~\ref{hypo} and~\ref{hypo:strong}.
For all functions $F\in C^\infty_c(\R^d)^{d\times d}$, set $I_\e(F):=\e^{-\frac d2} \int_{\R^d} \Xi(\tfrac \cdot\e):F$.
On the one hand, there exists a non-degenerate constant tensor $\calQ$ of order $4$ such that for all $F,F'\in C^\infty_c(\R^d)^{d\times d}$,
\begin{multline*}
\qquad\Big|\cov{I_\e(F)}{I_\e(F')}-\int_{\R^d} F(x):\calQ:F'(x)\,dx\Big|\\
\,\lesssim_{F,F'}\,\left\{\begin{array}{lll}
\e&:&d>2,\,\beta\ge d+1,\\
\e\Log^\frac12&:&d=2,\,\beta\ge d+1,\\
\e^{\beta-d}&:&d<\beta<d+1.
\end{array}\right.
\end{multline*}
On the other hand, for all $F\in C^\infty_c(\R^d)^{d\times d}$ and $\e>0$,  
\begin{multline*}
\qquad\dWW{\frac{I_\e(F)}{\var{I_\e(F)}^\frac12}}\Nc+\dTV{\frac{I_\e(F)}{\var{I_\e(F)}^\frac12}}\Nc\\
\,\lesssim_F\,\frac1{\var{I_\e(F)}} \e^\frac d2 \exp(C|\log \e|^\frac12),
\end{multline*}
where $\dWW\cdot\Nc$ and $\dTV\cdot\Nc$ denote the $2$-Wasserstein (see e.g.~\cite{NP-book}) and the total variation distance to a standard Gaussian law, respectively.
In particular, with $\sigma^2(F):=\int_{\R^d} F(x):\calQ:F(x)\,dx$, these two estimates combine to 

\begin{multline*}
\qquad\dWW{\frac{I_\e(F)}{\sigma(F)}}\Nc+\dTV{\frac{I_\e(F)}{\sigma(F)}}\Nc
\,\lesssim_F\, \e^\frac d2 \exp(C|\log \e|^\frac12)
\\
+\left\{\begin{array}{lll}
\e&:&d>2,\,\beta\ge d+1,\\
\e\Log^\frac12&:&d=2,\,\beta\ge d+1,\\
\e^{\beta-d}&:&d<\beta<d+1.
\end{array}\right.
\end{multline*}
\end{theor}
The only difference with the case of uniformly elliptic and bounded coefficients is the error estimate for the asymptotic normality: In \cite[Theorem~1(ii)]{duerinckx2019scaling}, $\exp(C|\log \e|^\frac12)$ is replaced by $|\log \e|$. This comes from the optimization argument in Step~6 Proof of Theorem~1(ii) in \cite{duerinckx2019scaling} which involves the stochastic integrability of the corrector gradient -- a similar optimization in the context of log-normal coefficients is worked out in formula (5.20) in Step~2 of the proof of Proposition~5.4 in \cite{GQ-24}, to which we refer the interested reader.

\section{Large-scale Meyers estimates}\label{sec:Meyers}

\subsection{Proof of Theorem~\ref{th:Meyers}}

We follow the standard proof based on a reverse H\"older's inequality (using Caccioppoli's inequality and the Sobolev embedding) and Gehring's inequality.

\medskip

We start with the reverse H\"older inequality.
\begin{lem}[Reverse H\"older]\label{lem:rev-Hol}
Recall that $p_\diamond =d+1$.  For all
$r\ge r_\diamond$ and all $u,h$ related in $B_{2r}$ by
\[
-\nabla \cdot a \nabla u\,=\,\nabla \cdot \sqrt a h,
\]
we have
\[
\fint_{B_r} (\sqrt a|\nabla u|)^2 \,\lesssim \, \fint_{B_{2r}} |h|^2
+\Big(\fint_{B_{2r}}  ({\sqrt a}|\nabla u|)^\alpha \Big)^{\frac2\alpha},
\]
where $1\le \alpha:=\frac{2d(d+1)}{d^2+d+2}<2$.
\end{lem}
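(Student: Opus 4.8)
The plan is to follow the classical route to a reverse Hölder inequality — Caccioppoli plus Sobolev — but carried out \emph{on the scale $\ra$} rather than pointwise, exploiting that on balls $\Ba(x)$ the coefficient $a$ behaves like a uniformly elliptic field in the averaged sense furnished by Proposition~\ref{prop:mombd-ra}. Fix $r\ge r_\diamond$ and a concentric pair $B_r\subset B_{2r}$. First I would record Caccioppoli's inequality for the equation $-\nabla\cdot a\nabla u=\nabla\cdot\sqrt a h$: testing with $\eta^2(u-c)$ for a cutoff $\eta$ supported in $B_{2r}$ and any constant $c$ gives
\[
\fint_{B_r} a|\nabla u|^2 \,\lesssim\, \frac1{r^2}\fint_{B_{2r}} a\,|u-c|^2 + \fint_{B_{2r}} |h|^2 .
\]
The point is that this is \emph{deterministic} and needs no ellipticity lower bound, since the $\sqrt a h$ form on the right is estimated by $\sqrt a|\nabla u|\cdot\sqrt a|h|$ and absorbed, leaving only $\fint|h|^2$. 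So the whole burden is to estimate the Dirichlet-type term $\frac1{r^2}\fint_{B_{2r}} a|u-c|^2$ by $\big(\fint_{B_{2r}}(\sqrt a|\nabla u|)^\alpha\big)^{2/\alpha}$.

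For that term I would choose $c=\fint_{B_{2r}} u$ and split $B_{2r}$ according to the scale $\ra$: write $u-c$ as a telescoping of averages $\fint_{\Ba(x)} u$, so that $|u(x)-c|$ is controlled by $|u(x)-\fint_{\Ba(x)}u|$ plus the oscillation of the coarse-grained function $\tilde u(x):=\fint_{\Ba(x)}u$ over $B_{2r}$. The first piece is a local Poincaré on the ball $\Ba(x)$ where $a$ is "good": $\fint_{\Ba(x)} a|u-\fint_{\Ba(x)}u|^2 \lesssim \ra(x)^2 \fint_{\Ba(x)} a|\nabla u|^2$, valid because on $\Ba(x)$ the average of $a$ and of $a^{-1}$ are comparable to $\expec{a^{p_\diamond}}$-type constants, and because $\ra(x)\le r$ means $\Ba(x)\subset B_{Cr}$ — here one enlarges $2r$ to $4r$ or so, harmless for Gehring. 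The coarse-grained oscillation is handled by the usual Sobolev/Poincaré–Sobolev inequality applied to $\tilde u$ at scale $r$: $\frac1{r^2}\fint_{B_{2r}}|\tilde u-\langle\tilde u\rangle|^2 \lesssim \big(\fint_{B_{2r}}|\nabla\tilde u|^{2_*}\big)^{2/2_*}$ with $2_*=\frac{2d}{d+2}$, and $|\nabla\tilde u(x)|\lesssim \fint_{\Ba(x)}|\nabla u|$. The exponent bookkeeping: one has $\fint_{\Ba(x)}|\nabla u| = \fint_{\Ba(x)} a^{-1/2}\cdot\sqrt a|\nabla u| \le \big(\fint_{\Ba(x)} a^{-q}\big)^{1/(2q)}\big(\fint_{\Ba(x)}(\sqrt a|\nabla u|)^{q'}\big)^{1/q'}$ by Hölder, and the $a^{-q}$ average is $O(1)$ precisely when $q\le p_\diamond = d+1$ (this is where $p_\diamond=d+1$ is used). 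Working out which $q'$ one can afford while still reaching the Sobolev exponent $2_*$ after a maximal-function bound $\|Mv\|_{L^s}\lesssim\|v\|_{L^s}$, $s>1$, on the dyadic-average operator $x\mapsto \fint_{\Ba(x)}(\cdot)$, one lands exactly on $\alpha=\frac{2d(d+1)}{d^2+d+2}$ — note $\alpha<2$ and $\alpha\ge1$, and a short computation confirms $\alpha\cdot\frac{1}{1-\text{(weight loss)}}$ matches $2_*$. I would present this exponent count carefully since it is the quantitative heart of the statement.

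Assembling: Caccioppoli $+$ local Poincaré on $\Ba(x)$-balls (contributing $\fint_{B_{Cr}} a|\nabla u|^2$ with a small-ish constant — but note $\ra(x)\le r$ so $\ra(x)^2/r^2\le1$, which is enough for the buckling since Gehring only needs the reverse-Hölder constant to be finite, not small) $+$ Poincaré–Sobolev on the coarse field $+$ maximal inequality gives
\[
\fint_{B_r}(\sqrt a|\nabla u|)^2 \,\lesssim\, \fint_{B_{2r}}|h|^2 + \Big(\fint_{B_{2r}}(\sqrt a|\nabla u|)^\alpha\Big)^{2/\alpha},
\]
after a standard absorption/hole-filling iteration to move the enlarged ball $B_{Cr}$ back to $B_{2r}$ (Widman's hole-filling, summing a geometric series of shrinking annuli — this is where Corollary~\ref{Lholefilling} ultimately comes from). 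The main obstacle I anticipate is not any single inequality but making the two-scale splitting rigorous: one must handle the fact that $\ra$ is only $\frac18$-Lipschitz (not constant), so $\Ba(x)$ varies with $x$ and the coarse-grained function $\tilde u$ is not simply a convolution at a fixed scale; controlling $\nabla\tilde u$ and the mapping properties of $x\mapsto\fint_{\Ba(x)}(\cdot)$ (a variable-kernel averaging operator) cleanly, and checking the exponent $\alpha$ is attained and not merely approached, is the delicate part. Everything else — Caccioppoli, Poincaré, Sobolev, maximal function, hole-filling — is routine once the scales are organized correctly.
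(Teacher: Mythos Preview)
Your two-scale decomposition is an unnecessary complication that also introduces a genuine gap. The paper's proof is much more direct: it never touches the balls $\Ba(x)$ with varying center, nor any maximal function or coarse-graining. The condition $r\ge\ra$ is used \emph{only} to bound the averages $\fint_{B_{2r}} a^{\pm p_\diamond}$ by constants; with that in hand one simply strips the weight by H\"older with exponents $(p_\diamond,\frac{p_\diamond}{p_\diamond-1})$ on the \emph{whole} ball $B_{2r}$, applies the unweighted Poincar\'e--Sobolev inequality to $u-\fint_{B_{2r}}u$, and then reinserts $\sqrt a$ by a second H\"older. Concretely: from Caccioppoli (with the \emph{weighted} mean $\bar u=\frac{\int\eta^2au}{\int\eta^2a}$, which is the only subtlety) one has $\fint_{B_r}a|\nabla u|^2\lesssim\fint_{B_{2r}}|h|^2+r^{-2}\fint_{B_{2r}}a|u-\bar U|^2$ for $\bar U:=\fint_{B_{2r}}u$; then $r^{-2}\fint_{B_{2r}}a|u-\bar U|^2\le(\fint_{B_{2r}}a^{d+1})^{\frac1{d+1}}r^{-2}(\fint_{B_{2r}}|u-\bar U|^{\frac{2(d+1)}d})^{\frac d{d+1}}$, the first factor is $O(1)$ by $r\ge\ra$, and the second is controlled by $(\fint_{B_{2r}}|\nabla u|^q)^{2/q}$ via Poincar\'e--Sobolev for an explicit $q<2$. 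A final H\"older splitting $|\nabla u|^q=(a^{-1/2})^q(\sqrt a|\nabla u|)^q$ and the bound on $\fint_{B_{2r}}a^{-(d+1)}$ yields exactly the exponent $\alpha=\frac{2d(d+1)}{d^2+d+2}$.

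Your approach, by contrast, has a hole at the ``local Poincar\'e on $\Ba(x)$-balls'' step. That term produces, as you say, $\frac{\ra(x)^2}{r^2}\fint_{\Ba(x)}a|\nabla u|^2$, which after summing gives $\lesssim\fint_{B_{Cr}}a|\nabla u|^2$ with constant of order $1$ --- you only have $\ra(x)\le r$, not $\ra(x)\ll r$. This puts the \emph{same} quantity $\fint a|\nabla u|^2$ on both sides with no exponent gain and no small constant, so it is not a reverse H\"older inequality at all. Your two escape routes both fail: Gehring's lemma requires the right-hand side to carry a \emph{strictly lower} integrability exponent (a finite constant in front of $(\fint U)^{2}$ is fine, a finite constant in front of $\fint U^2$ is vacuous); and Widman's hole-filling needs the $L^2$-to-$L^2$ term to come with a factor $\theta<1$, which you do not have. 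The paper's proof avoids the issue entirely because it never introduces a local piece --- the weight is removed by H\"older globally on $B_{2r}$ and never reappears as a same-exponent energy term.
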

\begin{proof}[Proof of Lemma~\ref{lem:rev-Hol}]
We start by establishing a Caccioppoli inequality.
Let $\eta$ be a smooth cut-off for $B_r$ in $B_{2r}$ with $\sup |\nabla \eta|\lesssim r^{-1}$.
Set $\bar u:=\frac{\int \eta^2 a u}{\int \eta^2 a}$, and notice that $\frac1C \int \eta^2 \le \int \eta^2 a \le C \int \eta^2$ since $r\ge r_\diamond$.
Testing the equation for $u$ with $\eta^2(u-\bar u)$, and integrating by parts we obtain
\begin{multline*}
\int \eta^2 a |\nabla (u-\bar u)|^2 +2 \int \eta a (u-\bar u)\nabla \eta \cdot \nabla (u-\bar u)
\\
= \int \eta^2 \sqrt a \nabla (u-\bar u)\cdot h+2 \int \eta \sqrt a(u-\bar u)\nabla \eta \cdot h.
\end{multline*}
Using Cauchy-Schwarz' and Young's inequalities, this entails
\begin{equation}\label{e.pr-mey-1}
 \int \eta^2 a |\nabla (u-\bar u)|^2 
\,\lesssim\, \int \eta^2 |h|^2 + \int |\nabla \eta|^2 a(u-\bar u)^2.
\end{equation}
It remains to deal with the weights on the right-hand side.
We start with the control of $\bar u$.
Set $\bar U:= \fint_{B_{2r}} u$. We have, since $\int_{B_{2r}} (\frac1{|B_{2r}|}- \frac {a\eta^2} {\int a \eta^2})=0$,
\[
\bar U-\bar u := \int_{B_{2r}} (\frac1{|B_{2r}|}- \frac {a\eta^2} {\int a \eta^2}) u=\int_{B_{2r}} (\frac1{|B_{2r}|}- \frac {a\eta^2} {\int a \eta^2}) (u-\bar U).
\]
Hence, by H\"older's inequality with exponents $(d+1,\frac{d+1}d)$ and Poincar\'e-Sobolev' inequality, 
and since $r\ge r_\diamond$
\begin{equation}\label{e.pr-mey-2}
|\bar U-\bar u|\,\lesssim\, \Big(\fint_{B_{2r}} (u-\bar U)^\frac{d+1}d\Big)^\frac d{d+1} \lesssim r\Big(\fint_{B_{2r}} |\nabla u|^\frac{d(d+1)}{d^2+d+1}\Big)^\frac {d^2+d+1}{d(d+1)}.
\end{equation}
Inserting \eqref{e.pr-mey-2}   into \eqref{e.pr-mey-1}, using H\"older's inequality with exponents $(d+1,\frac{d+1}d)$,  and using that $r\ge \ra$, the Poincar\'e-Sobolev inequality, and Jensen's inequality, we obtain 
\begin{eqnarray*}
\fint_{B_r} a |\nabla u|^2 
&\lesssim& \fint_{B_{2r}} \eta^2 |h|^2 + \Big(\fint_{B_{2r}} |\nabla u|^\frac{d(d+1)}{d^2+d+1}\Big)^{2\frac {d^2+d+1}{d(d+1)}}+\int |\nabla \eta|^2 a(u-\bar U)^2
\\
&\lesssim& \fint_{B_{2r}} \eta^2 |h|^2 + \Big(\fint_{B_{2r}} |\nabla u|^\frac{d(d+1)}{d^2+d+1}\Big)^{2\frac {d^2+d+1}{d(d+1)}}+ r^{-2}\Big(\fint_{B_{2r}} |u-\bar U|^\frac{2(d+1)}{d}\Big)^\frac {d+1}d \\
\\
&\lesssim &\fint_{B_{2r}} \eta^2 |h|^2 +  \Big(\fint_{B_{2r}} |\nabla u|^\frac{2d(d+1)}{d^2+2(d+1)}\Big)^\frac{d^2+2(d+1)} {d(d+1)}.
\end{eqnarray*}
It remains to reintroduce $a$.
By H\"older's inequality with exponents $(\frac{d^2+2(d+1)}{d},\frac{d^2+2(d+1)}{d^2+d+2})$,
\begin{eqnarray*}
\Big(\fint_{B_{2r}} |\nabla u|^\frac{2d(d+1)}{d^2+2(d+1)}\Big)^\frac{d^2+2(d+1)} {d(d+1)}
&=&\Big(\fint_{B_{2r}} a^{-\frac{d(d+1)}{d^2+2(d+1)}}|\sqrt{a}\nabla u|^\frac{2d(d+1)}{d^2+2(d+1)}\Big)^\frac{d^2+2(d+1)} {d(d+1)}
\\
&\le&
\Big(\fint_{B_{2r}} a^{-(d+1)} \Big)^\frac{1}{d+1} \Big(\fint_{B_{2r}} |\sqrt{a}\nabla u|^\frac{2d(d+1)}{d^2+d+2}\Big)^\frac{d^2+d+2}{d(d+1)}
\\
&\stackrel{r\ge \ra}\lesssim &\Big(\fint_{B_{2r}} |\sqrt{a}\nabla u|^\frac{2d(d+1)}{d^2+d+2}\Big)^\frac{d^2+d+2}{d(d+1)},
\end{eqnarray*}
from which the claim follows by our choice $\alpha=\frac{2d(d+1)}{d^2+d+2} \in [1,2)$.
\end{proof}
We now recall Gehring's inequality in a form which is convenient for our purposes (see for instance \cite[Theorem 6.38]{giaquinta2013introduction}):
\begin{lem}[Gehring's lemma]\label{gehring}
Let $s>1$, and  let $U$ and $V$ be two non-negative measurable functions in $L^q_\loc(\mathbb{R}^d)$ such that there exists $C>0$ for which for all $r>0$ and $x\in\mathbb{R}^d$
$$
\Big(\fint_{B_r(x)} U^s\Big)^{\frac{1}{s}}\leq C\Big(\fint_{B_{2r}(x)}U+\Big(\fint_{B_{2r}(x)}V^s\Big)^{\frac{1}{s}}\Big).
$$
Then, there exists $\bar{s}> s$ depending on $q$ and $C$ such that for all $r>0$ and $x\in\mathbb{R}^d$, we have 
$$
\Big(\fint_{B_r(x)}U^{\bar{s}}\Big)^{\frac{1}{\bar{s}}}\lesssim \fint_{B_{2r}(x)}U+\Big(\fint_{B_{2r}(x)}V^{\bar{s}}\Big)^{\frac{1}{\bar{s}}}.
$$
\end{lem}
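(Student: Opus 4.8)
This is the classical Gehring self‑improvement lemma, so the plan is to run its standard proof: a Calder\'on--Zygmund stopping‑time decomposition of the super‑level sets of $U^s$, followed by a good‑$\lambda$ inequality and an absorption argument obtained by integrating in $\lambda$ against $\lambda^{\epsilon-1}$. First I would reduce to a single ball: by the translation‑ and dilation‑invariance of both the hypothesis and the conclusion it suffices to prove the reverse H\"older inequality on $B_{1/2}(0)$ with the right‑hand side computed over $B_1(0)$, and by homogeneity in $(U,V)$ we may normalize $\mu:=\fint_{B_1}U+\big(\fint_{B_1}V^s\big)^{1/s}=1$; passing from balls to concentric cubes changes the constant $C$ only by a dimensional factor and makes dyadic subdivisions available. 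The a priori assumption $U,V\in L^q_{\loc}$ with $q>s$ enters only to guarantee that $\fint V^{\bar s}<\infty$ and that the absorption below is legitimate; the gain $\epsilon_0=\epsilon_0(C,d)>0$ is dimensional and we take $\bar s:=\min\{q,\,s(1+\epsilon_0)\}$.

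Next, for each level $\lambda$ above a threshold $\Lambda_0\sim C^{s}$ I would apply the dyadic Calder\'on--Zygmund decomposition to $U^s$ on a fixed cube $Q_0\supset B_1$: this produces pairwise disjoint dyadic subcubes $\{Q_j^\lambda\}_j$ with $\lambda<\fint_{Q_j^\lambda}U^s\le 2^d\lambda$ (the upper bound coming from the maximality of the parent), whose concentric doubles $2Q_j^\lambda$ stay in a fixed neighborhood of $Q_0$, and with $U^s\le\lambda$ almost everywhere on $Q_0\setminus\bigcup_jQ_j^\lambda$; in particular $\{U^s>\lambda\}$ is, up to a null set, covered by $\bigcup_jQ_j^\lambda$. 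On each $Q_j^\lambda$ the hypothesis, applied on the concentric ball and with the cube‑to‑ball constant absorbed into $C$, gives $\lambda<\fint_{Q_j^\lambda}U^s\lesssim\big(\fint_{2Q_j^\lambda}U\big)^s+\fint_{2Q_j^\lambda}V^s$; truncating $U$ and $V$ at the level $\eta\lambda^{1/s}$ for a small $\eta\in(0,1)$, the contribution of $\{U\le\eta\lambda^{1/s}\}$ is $\lesssim\eta^s\lambda$ by Jensen and is absorbed into the left‑hand side, and likewise for $V$. Summing over the disjoint family $\{Q_j^\lambda\}$ (whose doubles have bounded overlap) and using once more $\fint_{Q_j^\lambda}U^s\le2^d\lambda$ together with H\"older to convert $\big(\int_{2Q_j^\lambda}U\,\mathbf 1_{U>\eta\lambda^{1/s}}\big)^s$ back into $\int_{2Q_j^\lambda}U^s\,\mathbf 1_{U>\eta\lambda^{1/s}}$, one arrives at a good‑$\lambda$ inequality
\[
\int_{\{U^s>A\lambda\}}U^s\;\le\;\theta(\eta)\int_{\{U^s>\lambda\}}U^s\;+\;c_\eta\int_{\{V^s>\eta^s\lambda\}}V^s,\qquad\lambda>\Lambda_0,
\]
with $A=2^d$ and $\theta(\eta)\to0$ as $\eta\to0$.

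Then I would multiply by $\lambda^{\epsilon-1}$ and integrate over $\lambda\in(\Lambda_0,T)$ with $T<\infty$. By Fubini the left‑hand side equals, up to a constant multiple of $\Lambda_0^{\bar s/s}$ and an extra factor $A^\epsilon$ from the change of variables $\lambda\mapsto A\lambda$, a constant times $\tfrac1\epsilon\int_{\{U^s>\Lambda_0\}}(U\wedge T^{1/s})^{s(1+\epsilon)}$, while the $V$‑term integrates to $\lesssim\tfrac1\epsilon\int V^{s(1+\epsilon)}$. Choosing first $\eta$ and then $\epsilon=\epsilon_0$ small enough that $\theta(\eta)A^{\epsilon_0}\le\tfrac12$, and using $U\in L^q_{\loc}$ with $q\ge\bar s$ to make the resulting integrals finite uniformly in $T$, I absorb the $U^{\bar s}$‑term into the left‑hand side, let $T\to\infty$ by monotone convergence, and obtain $\fint_{B_{1/2}}U^{\bar s}\lesssim\mu^{\bar s}+\fint_{B_1}V^{\bar s}$; undoing the normalization and scaling gives the stated inequality for the pair $(B_{1/2},B_1)$, and the general pair $(B_r(x),B_{2r}(x))$ follows verbatim by translation and dilation.

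The only genuinely delicate point is the bookkeeping in the last two steps: the truncation level, the dilation factor $A$, the parameter $\eta$ and the exponent gain $\epsilon_0$ must be fixed in the correct order so that the absorption closes, and --- crucially --- the $\lambda$‑integration must be carried out over a truncated range $(\Lambda_0,T)$ (with $U$ truncated at $T^{1/s}$), legitimized by the a priori $L^q_{\loc}$ bound, rather than directly over $(\Lambda_0,\infty)$, since the finiteness of $\fint U^{\bar s}$ is precisely what is being proved. Everything else --- the covering argument, the cube/ball comparisons, and the final globalization --- is routine.
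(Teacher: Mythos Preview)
Your sketch reproduces the classical Gehring argument (Calder\'on--Zygmund stopping-time decomposition, good-$\lambda$ inequality, integration in $\lambda$ with truncation and absorption) and is correct as an outline; in particular you are right to flag the truncation at level $T$ as the point where the a~priori $L^q_{\loc}$ assumption is used to justify the absorption. Note, however, that the paper does not prove this lemma at all: it is simply quoted as a known result with a reference to \cite[Theorem~6.38]{giaquinta2013introduction}, so there is no ``paper's own proof'' to compare with --- your sketch is additional content rather than an alternative approach.
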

We conclude this paragraph with the proof of Theorem~\ref{th:Meyers}.
\begin{proof}[Proof of Theorem~\ref{th:Meyers}]
We first prove that for all $r>0$ we have
\begin{equation}\label{e.pr-mey-3}
\fint_{B_r} \Big(\fint_{B_\diamond(x)}  a|\nabla u|^2\Big)dx
\,
\lesssim\, \Big(\fint_{B_{2r}} \Big(\fint_{B_\diamond(x)}  a|\nabla u|^2\Big)^\frac \alpha2 dx\Big)^\frac 2\alpha + \fint_{B_{2r}} \fint_{B_\diamond(x)}  |h|^2 dx,
\end{equation}
and notice that the origin plays no role in this estimate.

If $r \le 3 r_\diamond(0)$, the first right-hand side term controls the left-hand side by a covering argument. It remains to address the case $r>3r_\diamond(0)$.
Since $r_\diamond$ is $\frac18$-Lipschitz we have
\[
\fint_{B_r} \Big(\fint_{B_\diamond(x)}  a|\nabla u|^2\Big)dx \,\lesssim\,\fint_{B_{\frac{67}{48}r}}   a|\nabla u|^2,
\]
see \cite[(C7)]{MR4619005}.
We now appeal to the reverse H\"older inequality in  form of
\[
\fint_{B_{\frac{67}{48}r}}   a|\nabla u|^2  \,\lesssim \, \fint_{B_{\frac{17}{12}r}} |h|^2
+\Big(\fint_{B_{\frac{17}{12}r}}  ({\sqrt a}|\nabla u|)^\alpha \Big)^{\frac2\alpha},
\]
which, by a covering argument (see  \cite[(C7)]{MR4619005} again), yields
\[
\fint_{B_{\frac{67}{48}r}}   a|\nabla u|^2  \,\lesssim \, \fint_{B_{2r}} \fint_{\Ba(x)} |h|^2dx
+\Big(\fint_{B_{2r}} \fint_{\Ba(x)}  ({\sqrt a}|\nabla u|)^\alpha dx \Big)^{\frac2\alpha}.
\]
Using Jensen's inequality on the second right-hand side (since $\alpha<2$), 
this proves \eqref{e.pr-mey-3}.
By Lemma~\ref{gehring} applied to
\[
U(x):= \Big(\fint_{B_\diamond(x)}  a|\nabla u|^2\Big)^\frac \alpha 2,
\quad V(x):= \Big(\fint_{B_\diamond(x)}  |h|^2\Big)^\frac \alpha 2,\quad s:=\frac 2 \alpha>1,
\]
one obtains for all $r>0$ and some $\gamma>1$
\begin{equation}\label{e.pr-mey-5}
\Big(\int_{B_r}\Big(\fint_{B_\diamond(x)}  a|\nabla u|^2\Big)^\gamma dx\Big)^\frac1\gamma
\lesssim r^{d(\frac1\gamma-1)}\int_{B_{2r}}\fint_{B_\diamond(x)}  a|\nabla u|^2 dx+\Big(\int_{B_{2r}}\Big(\fint_{B_\diamond(x)}  |h|^2\Big)^\gamma\Big)^{\frac{1}{\gamma}}.
\end{equation}
The desired Meyers' estimate follows by monotone convergence in the limit $r\uparrow +\infty$
combined with the $L^2$-energy estimate
\[
\int_{\R^d} \fint_{\Ba(x)} a|\nabla u|^2dx \lesssim \int_{\R^d} a|\nabla u|^2 \,\lesssim\, \int_{\R^d} |h|^2.
\]
\end{proof}

\subsection{Proof of Corollary~\ref{Lholefilling}}
Let $\gamma$ be as in \eqref{e.pr-mey-5}.
Without loss of generality we can assume $r\ge 3 \ra$ so that by \cite[Lemma~C.2]{MR4619005} we have
for all non-negative functions $h$
\begin{equation}\label{e.partition}
\int_{B_r} h \lesssim \int_{B_{2r}} \fint_{\Ba(x)} h dx, \quad \int_{B_r}\fint_{\Ba(x)} h dx \lesssim \int_{B_{2r}}  h .
\end{equation}
We may also assume $R\ge 4r$.
By \eqref{e.partition} we have
\begin{eqnarray*}
\int_{B_r}a |\nabla u|^2&\lesssim & r^{d} \fint_{B_{2r}} \fint_{\Ba(x)} a |\nabla u|^2 dx
\\
&\le &r^{d} \Big(\fint_{B_{2r}} \Big(\fint_{\Ba(x)} a |\nabla u|^2\Big)^\gamma dx\Big)^\frac1\gamma
\\
&\lesssim & r^{d(1-\frac1 \gamma)}   \Big(\int_{B_{\frac R2}} \Big(\fint_{\Ba(x)} a |\nabla u|^2\Big)^\gamma dx\Big)^\frac1\gamma.
\end{eqnarray*}
By \eqref{e.pr-mey-5} (used with $h\equiv 0$), this entails
\begin{eqnarray*}
\int_{B_r}a |\nabla u|^2&\lesssim & r^{d(1-\frac1 \gamma)}  R^{-d(1-\frac1\gamma)} \int_{B_{\frac R2}} \Big(\fint_{\Ba(x)} a |\nabla u|^2\Big)  dx,
\end{eqnarray*}
and the claim follows from \eqref{e.partition} with $0<\beta=d(1-\frac 1 \gamma)\le d$.

\subsection{Proof of Theorem~\ref{th:annealedmeyers}}

The proof is based on the quenched Meyers estimate in the large of Theorem~\ref{th:Meyers}, on the moment bounds on $\ra$ of Proposition~\ref{prop:mombd-ra} (which allows us to use duality at the price of a loss of stochastic integrability), real interpolation,
and a refined dual version of the Calder\'on-Zygmund lemma due to Shen~\cite[Theorem~3.2]{Shen-07}, based on ideas by Caffarelli and Peral~\cite{CP-98}.
Since it follows the proof of \cite{DO-20} almost line by line and is identical to 
the proof of \cite[Theorem~4.11]{MR4619005}, we leave the details to the reader.

\section{Control of correctors and of the minimal radius}\label{sec:corr}

In this section we prove Theorems~\ref{th:bd-corr} and~\ref{th:min-rad}.
We proceed using an approximation argument, and for all $M\ge 1$, we set $a_M:= (a \wedge M) \vee \frac1M$, which is uniformly elliptic and bounded.
By \cite{GNO1,GNO2}, Theorems~\ref{th:bd-corr} and~\ref{th:min-rad} hold true for all $M<\infty$ (with bounds that depend on $M$ a priori).
By uniqueness of correctors, it is an exercise to show that $\expec{|(\nabla \phi(a_M),\nabla \sigma(a_M))- (\nabla \phi(a),\nabla \sigma(a))|^{2\gamma}} \to 0$ for all $\gamma<1$ as $M\uparrow +\infty$, so that uniform moment bounds on $(\nabla \phi(a_M),\nabla \sigma(a_M))$ 
are retained by $(\nabla \phi(a),\nabla \sigma(a))$ in the limit $M\uparrow+\infty$. The control of $(\phi(a),\sigma(a))$ will  follow similarly from the uniform control of $(\phi(a_M),\sigma(a_M))$, which in turn allows to bound $\ru$.
Since $\ru$ is defined using $L^{\frac{2p_\diamond}{p_\diamond-1}}$ instead of $L^2$, we display the proofs for completeness. 

\medskip

Before we turn to the proofs, notice that Theorems~\ref{th:Meyers} and~\ref{th:annealedmeyers}
hold uniformly with $a$ replaced by $a_M$ for all $M\ge 1$ (with the same constants, exponents, and random radii).
In what follows, it is therefore enough to assume
\begin{itemize}
\item that $a$ is bounded and uniformly elliptic,
\item that Theorems~\ref{th:bd-corr} and~\ref{th:min-rad} hold true for some constants,
\end{itemize}
and to obtain bounds which only depend on Theorems~\ref{th:Meyers} and~\ref{th:annealedmeyers} -- and are therefore uniform wrt $M$.

\subsection{General strategy}

Our main result is as follows.
\begin{theor}[Decay of averages of corrector gradient]\label{th:aver-corr-grad}
There exists $c>0$ depending on $\mathcal{C}(0)$ such that for all $g\in L^2(\R^d)$ and unit vectors $e\in \R^d$
the random field $F:=\int_{\R^d} (\nabla \phi,\nabla \sigma)\cdot g$ satisfies for all $q\ge 1$
\[
\expec{|F|^{2q}}^\frac1q \le  e^{c q} \int_{\R^d} |g|^2.
\]
\end{theor}
Notice that this encodes the CLT scaling. Indeed, for $g_R(x)=|B_R|^{-d} \mathds 1_{B_R}$ this entails $\expec{\Big|\fint_{B_R} (\nabla \phi,\nabla \sigma)\Big|^{2q}}^\frac1q \,\lesssim_q \,R^{-d}$.
Passing from this result on averages of corrector gradients to correctors is routine, see e.g.~\cite{GNO2}.
Passing from correctors to $\ru$ is routine too, see e.g.~\cite{MR4619005}.
We thus only focus on the proof of Theorem~\ref{th:aver-corr-grad}, and follow the strategy of \cite[Section~6]{MR4619005}.

\medskip

We start with a control of averages of correctors that involves the energy density of the corrector.
\begin{prop}\label{prop:averages}
For all $0<\theta<1$, there exists a constant $c>0
$ depending on $\mathcal{C}(0)$ such that for all $g\in L^2(\R^d)$ and unit vectors $e\in \R^d$
the random field $F:=\int_{\R^d} (\nabla \phi,\nabla \sigma)\cdot g$ satisfies for all $q\ge 1$
such that $2q'\le 2+\kappa$ (where $\kappa$ is as in Theorem~\ref{th:annealedmeyers})
\[
\expec{|F|^{2q}}^\frac1q \le e^{cq} \expec{\Big(\int_{B(0)} a(|\nabla \phi|^2+1)\Big)^{q(1+\theta)}}^\frac1{q(1+\theta)} \int_{\R^d} |g|^2.
\]
\end{prop}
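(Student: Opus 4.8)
The plan is to represent $F=\int_{\R^d}(\nabla\phi,\nabla\sigma)\cdot g$ via a Helmholtz–type dual problem and then control the resulting bilinear expression using the annealed Meyers estimate (Theorem~\ref{th:annealedmeyers}) together with a sensitivity estimate for $F$ with respect to the Gaussian field $G$, following the scheme of \cite[Section~6]{MR4619005}. First I would use the functional inequality satisfied by $a$ (the multiscale spectral gap / logarithmic Sobolev inequality coming from the integrable covariance $\calC$, as in \cite{DG1,DG2,GNO1,GNO2}) to reduce the bound on $\expec{|F|^{2q}}$ to a bound on the $L^{2q}$-norm of the carré-du-champ (vertical derivative) $\partial^{\mathrm{fct}}F$, paying an $e^{cq}$ factor with $c$ depending on $\calC(0)$. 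This is where the exponential-in-$q$ growth — hence the Gaussian-type stochastic integrability $\expec{\exp(\tfrac1C\log^2(\cdot))}$ — enters.

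Next I would compute the vertical derivative of $F$. Differentiating the corrector equations \eqref{CorectorEquation} and the gauge equation \eqref{GaugeEquation} with respect to a perturbation of $G$ localized near a point $y$, one sees that $\partial^{\mathrm{fct}}_y F$ is controlled by a product of the form $\sqrt a\,|\nabla\phi+e|\cdot\sqrt a\,|\nabla v_g|$ evaluated near $y$ (up to weights involving $a$), where $v_g$ solves the dual problem $-\nabla\cdot a^*\nabla v_g=\nabla\cdot g$ (and similarly the $\sigma$-part produces a contribution governed by the solution of the Laplace problem with the same right-hand side structure). Then I would take the $L^2_y$-norm of $\partial^{\mathrm{fct}}F$, split via Hölder between the corrector factor and the dual factor at local scales (using the partition trick \eqref{e.partition} and the Lipschitz field $\ra$ to pass from $\fint_{B_\rho}$ to $\fint_{\Ba}$), and invoke the annealed Meyers estimate of Theorem~\ref{th:annealedmeyers} on $v_g$: choosing the Meyers exponent $p$ slightly above $2$ with dual exponent $2q'\le 2+\kappa$ gives $\big(\int_{\R^d}\expec{(\fint_{B(x)}a|\nabla v_g|^2)^{q'}}^{1/q'}\big)^{\dots}\lesssim \int_{\R^d}|g|^2$, which produces the factor $\int_{\R^d}|g|^2$ and, after the Hölder split, the corrector energy factor $\expec{(\int_{B(0)}a(|\nabla\phi|^2+1))^{q(1+\theta)}}^{1/q(1+\theta)}$ with $\theta$ absorbing the small integrability loss in Meyers. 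Combining with the spectral-gap step yields the claimed inequality.

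The main obstacle I expect is the bookkeeping of weights $a$ and $a^{-1}$ when localizing the bilinear product $\partial^{\mathrm{fct}}F$ and splitting it by Hölder: because the coefficients are degenerate, one cannot simply bound $|\nabla v_g|$ by $\sqrt a|\nabla v_g|$, and one must route every estimate through the $\Ba$-averaged quantities and the moment bounds \eqref{MomentNonSupEllRatio}–\eqref{MomentBoundEllipticityRadius} for $\ra$ so that the annealed Meyers estimate applies with the correct weighted right-hand side. A secondary technical point is handling the $\sigma$-component, whose vertical derivative is governed by $\triangle^{-1}$ applied to $\nabla\times$ of the perturbed flux; this requires an additional Calderón–Zygmund / annealed bound, but since $2q'\le 2+\kappa$ keeps us in the perturbative range of Theorem~\ref{th:annealedmeyers}, it only contributes the same type of estimate. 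Everything else — the covering arguments, Jensen, and the passage from corrector gradients to correctors and then to $\ru$ — is routine, as already noted in the text.
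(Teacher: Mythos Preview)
Your overall strategy matches the paper's: apply the logarithmic Sobolev inequality to reduce $\expec{|F|^{2q}}^{1/q}$ to the $q$-th moment of $\|DF\|_{L^2}$; represent $DF$ via dual problems (for $\phi$: $-\nabla\cdot a\nabla u=\nabla\cdot g$; for $\sigma$: $-\triangle v=\nabla\cdot g$ followed by $-\nabla\cdot a\nabla w=\nabla\cdot a(\partial_i v e_j-\partial_j v e_i)$); separate the stationary corrector-energy factor by duality in probability and H\"older; and feed the dual functions through the annealed Meyers estimate.

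However, you misidentify the origin of the $e^{cq}$ factor. The LSI step contributes only a polynomial factor $q$. The exponential loss enters later, when \emph{removing} the weight $a$: annealed Meyers on the $w$-equation leaves a weighted quantity $\int_{\R^d}\expec{(\fint_{B(x)}a|\nabla v|^2)^{\cdots}}^{\cdots}dx$, whereas Calder\'on--Zygmund for the Laplace equation on $v$ requires the unweighted $|\nabla v|^2$. The paper strips the weight by H\"older with exponents $(\tfrac{1+\eta}{\eta},1+\eta)$ where $\eta\sim\theta/q$ (this choice of $\eta$ is exactly what places the corrector factor at exponent $q(1+\theta)$), and this produces $\expec{a^{q'/\eta}}^{\eta/q'}=\exp\big(\tfrac{\calC(0)}{2}\tfrac{q'}{\eta}\big)\le e^{cq}$ via \eqref{MomentA}. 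This is the mechanism specific to log-normal coefficients that degrades stochastic integrability from Gaussian to the $\exp(\tfrac1C\log^2)$ class; attributing it to LSI would predict the same loss in the uniformly elliptic case, which is false. A minor point: the proof works directly with unit balls $B(x)$ as in Theorem~\ref{th:annealedmeyers}, so the $\Ba$-covering and partition machinery you plan to invoke is not needed at this stage.
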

Based on Proposition~\ref{prop:averages} itself, we shall prove moment bounds on the energy density of the corrector.
\begin{prop}\label{prop:moment-en-dens} 
There exists a constant $c>0$  depending on $\mathcal{C}(0)$
such that for all $q\ge 1$,
\begin{equation}\label{MomentBoundGradientCor}
\expec{\Big(\int_{B(0)} a(|\nabla \phi|^2+1)\Big)^{q}}^\frac1q \le  e^{cq
},
\end{equation}
and 
\begin{equation}\label{Moment BoundFluxCor}
\expec{\Big(\int_{B(0)}\vert\nabla \sigma\vert^2\Big)^q}^\frac{1}{q} \le  e^{cq}.
\end{equation}
\end{prop}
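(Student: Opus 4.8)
The plan is to run a buckling argument that couples the energy density of the corrector to averages of the corrector gradient via Proposition~\ref{prop:averages}, closing the loop through a Caccioppoli-type estimate and the large-scale Meyers' estimate. First I would test the corrector equation \eqref{CorectorEquation} against $\phi_i$ itself (localized with a cutoff on $B(0)$, which is licit since we work with the bounded regularization $a_M$), and use the large-scale Lipschitz/mean-value property stemming from the hole-filling estimate of Corollary~\ref{Lholefilling} to pass from $\int_{B(0)} a|\nabla\phi_i|^2$ to something like $1 + (\text{local average of } \phi_i \text{ over a larger ball, normalized by the radius})^2$. More precisely, the Caccioppoli inequality for \eqref{CorectorEquation} on $B_{2r}$ gives
\[
\fint_{B_r} a|\nabla\phi_i+e_i|^2 \,\lesssim\, 1 + \frac1{r^2}\fint_{B_{2r}}\Big|\phi_i - \fint_{B_{2r}}\phi_i\Big|^2,
\]
and combining this (at the unit scale $r\simeq 1$, after a covering argument handling $\ra(0)$) with the hole-filling decay lets one control the energy density by a spatial average of $|\phi_i - \fint \phi_i|^2$ over a fixed ball. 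The flux bound \eqref{Moment BoundFluxCor} for $\nabla\sigma$ then follows from the gauge equation \eqref{GaugeEquation}: by elliptic regularity $\int_{B(0)}|\nabla\sigma_{ijk}|^2 \lesssim \int (\text{localized}) |q_i|^2$, and $|q_i|^2 \le a\,|\nabla\phi_i+e_i|^2 \cdot a$, so after the Meyers' improvement of integrability (Theorem~\ref{th:Meyers}, which holds uniformly in $M$) one reduces $\int_{B(0)}|\nabla\sigma|^2$ to the same quantity $\int_{B(0)}a(|\nabla\phi|^2+1)$ already being estimated. Hence it suffices to prove \eqref{MomentBoundGradientCor}.

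For \eqref{MomentBoundGradientCor}, let $\mathcal{E}_q := \expec{\big(\int_{B(0)} a(|\nabla\phi|^2+1)\big)^q}^{1/q}$. The Caccioppoli+hole-filling reduction above, together with stationarity and the representation of $\phi_i - \fint_{B_\rho}\phi_i$ as a spatial average of $\nabla\phi_i$ against a suitable kernel $g_\rho \in L^2(\R^d)$ with $\|g_\rho\|_{L^2}^2 \lesssim \rho^{2-d}$ times a logarithmic/polynomial factor (the potential-theoretic representation used to pass from $\nabla\phi$ to $\phi$, cf.~\cite{GNO2}), reduces $\mathcal{E}_q$ to estimating $\expec{|F|^{2q}}^{1/q}$ for $F = \int_{\R^d}(\nabla\phi,\nabla\sigma)\cdot g$ with $\int|g|^2 \lesssim 1$ (after normalizing by the ball radius, exactly as in the CLT-scaling discussion following Theorem~\ref{th:aver-corr-grad}). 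Now apply Proposition~\ref{prop:averages} with some fixed small $\theta>0$ and $q' $ in the Meyers range $2q' \le 2+\kappa$: this gives
\[
\expec{|F|^{2q}}^\frac1q \,\le\, e^{cq}\,\mathcal{E}_{q(1+\theta)}.
\]
Chaining the two displays yields a self-improving bound of the form $\mathcal{E}_q \le e^{cq}\,\mathcal{E}_{q(1+\theta)}^{\theta'}$ for some $\theta' < 1$ coming from interpolating the $L^{2q}$-moment of $F$ against a crude a priori bound (the deterministic-in-$M$ energy estimate $\int_{\R^d} a|\nabla\phi_i|^2 < \infty$ combined with stationarity gives $\mathcal{E}_q \le C(M)$, finite though not uniform). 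Iterating $q \mapsto q(1+\theta)$ finitely or infinitely many times and summing the geometric series of the $e^{cq}$ prefactors (using that $\sum_k (1+\theta)^k$-type sums converge after taking the $\theta'$-contraction into account) closes the buckling with a bound $\mathcal{E}_q \le e^{cq}$ \emph{uniform in $M$} — which is the whole point, per the reduction at the start of Section~\ref{sec:corr}. Finally one removes the $a_M$-regularization by letting $M\uparrow\infty$ as explained there.

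The main obstacle is the buckling bookkeeping: one must arrange the exponents so that (i) the dual exponent $q'$ appearing in the Meyers step stays within the perturbative range $[2,2+\kappa]$ — which is why Proposition~\ref{prop:averages} is stated with the flexible parameter $\theta$ and with the constraint $2q'\le 2+\kappa$ — and (ii) the interpolation used to extract a genuine contraction $\theta'<1$ is compatible with the a priori bound being only $M$-dependent (so the iteration must be set up to be self-improving rather than merely consistent). Getting the constant in $e^{cq}$ to depend only on $\calC(0)$ (hence on the spectral-gap constant, not on $M$) requires tracking that every constant entering the Caccioppoli/hole-filling/Meyers steps is the one furnished by Theorems~\ref{th:Meyers}–\ref{th:annealedmeyers} and Corollary~\ref{Lholefilling}, all of which are $M$-uniform. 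This is exactly the scheme of \cite[Section~6]{MR4619005}, so modulo replacing $L^2$ by $L^{2p_\diamond/(p_\diamond-1)}$ where the statement of Theorem~\ref{th:bd-corr} demands it, the argument goes through; I would refer to that reference for the parts that are verbatim and only spell out the places where the degenerate weight $a$ (rather than a bounded coefficient) changes the Caccioppoli and flux estimates.
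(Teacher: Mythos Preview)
Your buckling scheme does not close. With the kernel $g$ normalized so that $\int|g|^2\lesssim 1$ (which is what you obtain by working at the unit scale around $B(0)$), Proposition~\ref{prop:averages} only yields $\mathcal{E}_q \le e^{cq}\,\mathcal{E}_{q(1+\theta)}$, which is vacuous since higher moments dominate lower ones. The contraction exponent $\theta'<1$ you invoke ``from interpolation'' has no source in your argument: there is no smallness anywhere. Relatedly, you cannot use the ``large-scale Lipschitz/mean-value property'' here---that is Proposition~\ref{SchauderTheoryLarge}, which requires the control of $\ru$ from Theorem~\ref{th:min-rad}, itself a \emph{consequence} of the proposition you are proving. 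Only the perturbative hole-filling of Corollary~\ref{Lholefilling} (with the sub-Lipschitz exponent $d-\beta$) is available at this stage.

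The missing idea is the introduction of the auxiliary random radius $\rc$ (Definition~\ref{def:rc}) and a genuinely two-scale argument. The paper bounds the level set $\{\rc=R\}$: on that event, Caccioppoli at scale $R$ forces $\frac{1}{R^2}\fint_{B_R}|\phi-c|^2\gtrsim 1$; one then splits via Poincar\'e at an \emph{intermediate} scale $R^\mu$ into a term $R^{2(\mu-1)}\fint_{B_R}|\nabla\phi|^2$ (small by the scale gap) and a term $\fint_{B_R}\big|\fint_{B_{R^\mu}(x)}\nabla\phi\big|^2dx$. The latter is where Proposition~\ref{prop:averages} is applied, with $g=|B_{R^\mu}|^{-1}\mathds1_{B_{R^\mu}}$, so that the CLT gain $\int|g|^2\sim R^{-d\mu}$ provides the smallness. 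Hole-filling then links $\int_B a|\nabla\phi|^2$ to $\rc^{d-\beta}\ra^\beta$ (not $\rc^d$), and the buckling closes because the moment on the right involves the exponent $(d-\beta)(1+\theta)^3$, strictly smaller than the exponent $d-\beta/K$ on the left for $\theta$ small. This gap---coming from the combination of hole-filling and CLT scaling at two separated scales---is the engine of the proof, and it is absent from your proposal.
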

The combination of Propositions~\ref{prop:averages} and~\ref{prop:moment-en-dens} directly yields Theorem~\ref{th:aver-corr-grad}.
For the proof of Proposition~\ref{prop:moment-en-dens}, we introduce a third minimal radius, which quantifies at which scale the energy density of the corrector 
behaves like a uniformly bounded function of $a$. 
\begin{defin}\label{def:rc}
We define the minimal radius $\rc$ as 
\[
\rc :=\max_{|e|=1}\inf_{r =2^k \ge \ra, k\in \N} \Big\{\forall R \ge r \text{ dyadic}, \quad \fint_{B_{R}} a |\nabla \phi_e|^2 \le C \fint_{B_{2R}}a \Big\},
\]
where $\phi_e$ denotes the corrector in the unit direction $e$, and $C>0$ will be chosen in the proof of Proposition~\ref{prop:moment-en-dens}.
\end{defin}
The core of the proof of Proposition~\ref{prop:moment-en-dens} is a control of the level-sets of $\rc$ using averages of the corrector gradient.

\subsection{Proof of Proposition~\ref{prop:averages}}

Let the direction $e\in \R^d$ with $|e|=1$, and $ij$ be fixed,
and consider $F_1 := \int_{\mathbb{R}^d} \nabla \phi \cdot g$, and $F_2:=\int_{\mathbb{R}^d} \nabla \sigma_{ij} \cdot g$. For background on Malliavin calculus and functional calculus, we refer the reader to \cite[Section~2]{GQ-24} (dimension 1) and \cite[Section~2.1]{duerinckx2019scaling} (any dimension).

\medskip

\step1 Malliavin derivatives of $F_1$ and $F_2$.
\\
We claim that 
\begin{eqnarray}
D F_1& =& \int_{\mathbb{R}^d} \nabla u \cdot Da (\nabla \phi+e),\label{e.rep-fo-DF1}
\\
DF_2&=& \int_{\mathbb{R}^d} (\partial_i v e_j-\partial_j v e_i+\nabla w) \cdot Da (\nabla \phi+e),\label{e.rep-fo-DF2}
\end{eqnarray}
where $u,v,w$ solve
\begin{eqnarray}
-\nabla \cdot a \nabla u &=& \nabla \cdot g\label{e.pr-function1},
\\
\label{e.pr-function2}
-\triangle  v &=& \nabla \cdot g,\\
\label{e.pr-function3}
-\nabla \cdot a \nabla w& =& \nabla \cdot a (\partial_i v e_j-\partial_j v e_i).
\end{eqnarray}
(Notice that  these equations are all well-posed in $\dot H^1(\R^d)$ since $a$ is uniformly elliptic and bounded by assumption.)
Indeed $D F_1 = \int_{\R^d} \nabla D \phi \cdot g$ and, using the corrector equation~\eqref{CorectorEquation}, $D \phi$ solves
\[
-\nabla \cdot a \nabla D \phi \,=\, \nabla \cdot D a (\nabla \phi+e).
\]
Using \eqref{e.pr-function1}, we then obtain the representation formula \eqref{e.rep-fo-DF1}.
We now turn to $F_2$.
As for $F_1$ we have  
$D F_2 = \int_{\R^d} \nabla D \sigma_{ij} \cdot g$ and, using~\eqref{GaugeEquation}, $D \sigma_{ij}$ solves
\[
-\triangle D \sigma_{ij} \,=\, \partial_i D(a(\nabla \phi+e)\cdot e_j)-\partial_j D(a(\nabla \phi+e)\cdot e_i).
\]
Using first \eqref{e.pr-function2}, we have
\begin{eqnarray*}
D F_2 &=& \int (\partial_i v e_j-\partial_j v e_i) \cdot D(a(\nabla \phi+e)) 
\\
&=&\int (\partial_i v e_j-\partial_j v e_i) \cdot Da(\nabla \phi+e)+\int (\partial_i v e_j-\partial_j v e_i) \cdot a\nabla D\phi ,
\end{eqnarray*}
and we reformulate the last term using \eqref{e.pr-function3} to obtain \eqref{e.rep-fo-DF2}.

\medskip

Notice that our choice $a(G(x))=\exp(G(x))$ yields $D_za = a \delta(\cdot-z)$ for all $z\in \R^d$,
which we shall use in the estimates (again, the specific form is convenient but not essential).

\medskip

\step2 Application of the logarithmic-Sobolev inequality.
\\
By LSI, Step~1, and the identity $D_za = a \delta(\cdot-z)$, we have for all $q\ge 1$,
\[
\expec{|F|^{2q}}^\frac1q \,\lesssim\, q \expec{ \Big(\int_{\R^d} \Big(\int_{B(x)} (|\nabla u|+|\nabla v|+|\nabla w|) a (|\nabla \phi|+1)\Big)^2 dx \Big)^q}^\frac1q.
\]
We only focus on the term involving $\nabla w$, which is more involved since $w$ is obtained by solving two equations in a row.
By Cauchy-Schwarz',
\[
\Big(\int_{B(x)} |\nabla w| a (|\nabla \phi|+1)\Big)^2
\,\le\, \int_{B(x)} a|\nabla w|^2 \int_{B(x)} a(|\nabla \phi|^2+1).
\]
By duality in probability, this yields
\begin{multline*}
\expec{ \Big(\int_{\R^d} \Big(\int_{B(x)} |\nabla w| a (|\nabla \phi|+1)\Big)^2 dx \Big)^q}^\frac1q
\\
\le \, \sup_{X} \expec{\int_{\R^d} \Big(\int_{B(x)} a(|\nabla \phi|^2+1)\Big)\Big( \int_{B(x)} a|\nabla X w|^2\Big) dx },
\end{multline*}
where the supremum runs over random variables $X$ (which are thus independent of the space variable) such that $\expec{|X|^{2q'}}=1$.
We then set $\eta_\circ:=\frac{\theta}{(1+\theta)(q-1)}$, to the effect that $q'>1+\eta_\circ$ and 
$\frac{q'}{q'-(1+\eta_\circ)}=q(1+\theta)$, and use H\"older's inequality with exponents
$(\frac{q'}{q'-(1+\eta_\circ)},\frac{q'}{1+\eta_\circ})$, so that the above turns into
\begin{multline*}
\expec{ \Big(\int_{\R^d} \Big(\int_{B(x)} |\nabla w| a (|\nabla \phi|+1)\Big)^2 dx \Big)^q}^\frac1q
\\
\le \, \expec{\Big(\int_{B(0)} a(|\nabla \phi|^2+1)\Big)^{q(1+\theta)}}^\frac1{q(1+\theta)}
\sup_X\int_{\R^d}  \expec{\Big( \int_{B(x)} a|\nabla X w|^2\Big)^\frac{q'}{1+\eta_0} }^\frac{1+\eta_0}{q'}dx,
\end{multline*}
where we used the stationarity of $x\mapsto \int_{B(x)} a(|\nabla \phi|^2+1)$.
For convenience, we rewrite $1+\eta_\circ$ as $(1+\eta)^2$,
and apply Theorem~\ref{th:annealedmeyers} to \eqref{e.pr-function3}, which yields provided $
2q' \le 2+ \kappa$, 
\begin{equation*}
\int_{\R^d}  \expec{\Big( \int_{B(x)} a|\nabla X w|^2\Big)^\frac{q'}{(1+\eta)^2} }^\frac{(1+\eta)^2}{q'}dx
\,
\lesssim \, \zeta(\eta_0)\int_{\R^d}   \expec{\Big( \int_{B(x)} a|\nabla X v|^2\Big)^\frac{q'}{1+\eta} }^\frac{1+\eta}{q'}dx,
\end{equation*}
where $\zeta:t \mapsto t^{-\frac14}|\log t|^\frac12$ (since for $0<\eta_\circ<\frac12$, $\zeta(\eta)=\zeta(\sqrt{1+\eta_\circ}-1) \lesssim \zeta(\eta_\circ)$).
By H\"older's inequality with exponents $(\frac{1+\eta}{\eta},1+\eta)$, followed by the version of Theorem~\ref{th:annealedmeyers} for the Laplacian (see for instance \cite[Theorem~1.14]{{MR4619005}}) applied to \eqref{e.pr-function2} (with exponent $q' \lesssim 1$)
we further have  
\begin{eqnarray*}
\int_{\R^d}   \expec{\Big( \int_{B(x)} a|\nabla X v|^2\Big)^\frac{q'}{1+\eta} }^\frac{1+\eta}{q'}dx
&\le &\expec{a^{\frac {q'}\eta}}^\frac{\eta}{q'} \int_{\R^d}   \expec{\Big( \int_{B(x)} |\nabla X v|^2\Big)^{q'} }^\frac{1}{q'}dx
\\
&\lesssim & \expec{a^{\frac {q'}\eta}}^\frac{\eta}{q'} \expec{|X|^{2q'}}^\frac1{q'}  \int_{\R^d}  |g|^2
\\
&=&\expec{a^{\frac {q'}\eta}}^\frac{\eta}{q'}   \int_{\R^d}  |g|^2,
\end{eqnarray*}
where we used that $g$ is deterministic and $\expec{|X|^{2q'}}=1$.
The desired stochastic integrability comes from a direct calculation. On the one hand, if $q$ is large enough, $2\eta\sim\eta_0
\sim \frac{\theta}{1+\theta}q^{-1}$. On the other hand, using the Taylor expansion of the exponential and Gaussianity, we have
\begin{equation}\label{MomentA}
\expec{a^r}^\frac{1}{r}=\exp(\tfrac{\mathcal{C}(0)}{2}r)\quad\text{for any $r\geq 1$.}
\end{equation}
Finally, noticing in addition that $q'<2$, there is a $c>0$ depending on $\theta$ and $\mathcal{C}(0)$ such that
\begin{eqnarray*}
\zeta(\eta_0)\expec{a^{\frac {q'}\eta}}^\frac{\eta}{q'} =\eta_0^{-\frac 14}|\log \eta_0|^{\frac12} e^{\frac {q'}{2
\eta}}\, \le \, e^{c q}.
\end{eqnarray*}
This concludes the proof of the proposition.

\subsection{Proof of Proposition~\ref{prop:moment-en-dens}}
Wlog we may fix a direction $e$. We split the proof into four steps. For the proof of \eqref{MomentBoundGradientCor}, the core of the argument is to control the moments of $\rc$ in form of 
\begin{equation}\label{ControlMomentRC}
\expec{\rc^q}\le  e^{c q^2},
\end{equation}
for some $c>0$. We then deduce \eqref{MomentBoundGradientCor} using that the definition of $\rc$ in form of
\[\int_{B(0)} a(\vert\nabla \phi\vert^2+1)\lesssim \rc^d \fint_{B_{\rc}(0)} a\lesssim \rc^d,\]
where the latter is a consequence of $\rc\geq \ra$. We show \eqref{ControlMomentRC} in the first three  steps. In the last step, we prove that \eqref{Moment BoundFluxCor} is a consequence of \eqref{MomentBoundGradientCor}.

Since $\rc = \ra \mathds{1}_{\rc = \ra}+ \rc \mathds{1}_{\rc>\ra}$ and $\ra$ is well-controlled by Proposition~\ref{prop:mombd-ra}, it is enough to focus on $\rc \mathds{1}_{\rc>\ra}$. In what follows we use $\rc$ as a short-hand notation for $\rc \mathds{1}_{\rc>\ra}$, and recall that we have set $\rs(R):= \Big(R^{-\frac \e2} \sup_{B_R} (a+a^{-1})\Big)^2$. 

\medskip

\step1 Control of the level-sets of $\rc$.
\\
We claim that for all $0<\mu<1$ there exists $c>0$ such that for all dyadic $R$ and all $q\ge 1$, we have
\begin{multline}\label{e.Markov}
\expec{\mathds{1}(\rc=R)} \, \le\, c^q  R^{-q(d-\beta+2(1-\mu)-\e)}\expec{\rs(R)^q\rc^{q(d-\beta)}}
\\
+c^q R^{q\e} \expec{\rs(R)^q \Big(\fint_{B_R} \Big|\fint_{B_{R^\mu}(x)} \nabla \phi\Big|^2dx\Big)^q}.
\end{multline}
Assume that $\rc=R>\ra$. Then, by definition, we both have
\begin{eqnarray*}
\fint_{B_{R}} a|\nabla \phi|^2 &\le & C \fint_{B_{2R}} a,\\
\fint_{B_{\frac R2}} a|\nabla \phi|^2 &\ge & C \fint_{B_{R}} a.
\end{eqnarray*}
We now appeal to the Caccioppoli inequality~\eqref{e.pr-mey-1} applied to the corrector equation $-\nabla\cdot a \nabla \phi = \nabla \cdot a e$, which we rewrite in the form,
\[
\fint_{B_{\frac R2}} a |\nabla \phi|^2 \le C' \Big(\fint_{B_{R}} a+\inf_{c} \frac1{R^2} \fint_{B_R} a|\phi-c|^2 \Big),
\]
for some universal constant $C'$.
Provided we choose $C = 2C'$ (which completes Definition~\ref{def:rc}), this entails
\[
\inf_{c} \frac1{R^2} \fint_{B_R} a|\phi-c|^2 \gtrsim \fint_{B_R} a \gtrsim 1,
\]
using that $R \ge \ra$.

\medskip

Let $0<\mu <1$, and set $c_R := \fint_{B_R} \fint_{B_{R^\mu(x)}} \phi$.
By definition of $\rs$, we have
\[
1 \lesssim   \frac1{R^2} \fint_{B_R} a|\phi-c_R|^2  \le \sqrt{R^\e  \rs(R)} \frac1{R^2} \fint_{B_R} |\phi-c_R|^2.
\]
By the triangle inequality,
\begin{eqnarray*}
\frac1{R^2} \fint_{B_R} |\phi-c_R|^2&\lesssim& \frac1{R^2} \fint_{B_R} \Big| \phi-\fint_{B_{R^\mu}(x)} \phi\Big|^2dx+\frac1{R^2} \fint_{B_R} \Big|\fint_{B_{R^\mu}(x)} \phi-c_R\Big|^2.
\end{eqnarray*}
Using Poincar\'e's inequality (on domains of size $R^\mu$ for the first term, and on $B_R$ for the second term), this yields
\begin{eqnarray*}
\frac1{R^2} \fint_{B_R} |\phi-c_R|^2&\lesssim& R^{2(\mu-1)} \fint_{B_R} |\nabla \phi|^2+ \fint_{B_R} \Big|\fint_{B_{R^\mu}(x)} \nabla \phi\Big|^2.
\end{eqnarray*}
Whereas the second right-hand side term already has the correct form, we need to put back the weight on the first right-hand side term. For that we pay again $\sqrt{\rs(R) R^\e}$.
We have thus proved
\begin{eqnarray*}
1 &\lesssim &\rs(R) R^{\e}\Big( R^{2(\mu-1)}  + \fint_{B_R} \Big|\fint_{B_{R^\mu}(x)} \nabla \phi\Big|^2\Big)
\\
&\stackrel{\rc=R}=&\rs(R) R^{\e}\Big( R^{2(\mu-1)-d+\beta}\rc^{d-\beta}  + \fint_{B_R} \Big|\fint_{B_{R^\mu}(x)} \nabla \phi\Big|^2\Big).
\end{eqnarray*}
By Markov' inequality, this implies the claim.

\medskip

\step2 Hole-filling and control of the right-hand side of \eqref{e.Markov}.
\\
We claim that \eqref{e.Markov} entails for all $0<\theta<1$, all dyadic $R$ and all $q\ge 1$,
\begin{equation}\label{e.Markov2}
\expec{\mathds{1}(\rc=R)} \, \leq\,   e^{c_\theta q^2} \Big(R^{-q(d-\beta+2(1-\mu)-\e)} +
 R^{-q(d\mu-\e)}\Big) \expec{\rc^{q(d-\beta)(1+\theta)^3}}^\frac{1}{(1+\theta)^3},
\end{equation}
for some $c_{\theta}$ depending  on $\theta$. The first right-hand side term of \eqref{e.Markov2} comes from the first right-hand side term of \eqref{e.Markov} using H\"older's inequality and   the moment bound, for some $c>0$ and any $q\geq 1$,
\begin{equation}\label{AlgebraicMomentEllipticity}
\expec{\rs(R)^{q}}\leq  e^{c q^2},
\end{equation}
that we deduce from \eqref{MomentBoundEllipticityRadius}. The second right-hand side term is more subtle. By H\"older's inequality with exponents $(\frac{1+\theta}{\theta},1+\theta)$, Jensen's inequality, and stationarity of $x \mapsto \fint_{B_{R^\mu}(x)} \nabla \phi$ we have
\begin{eqnarray*}
\expec{ \rs(R)^q\Big(\fint_{B_R} \Big|\fint_{B_{R^\mu}(x)} \nabla \phi\Big|^2dx\Big)^q}
&\le& \expec{ \rs(R)^{q\frac{1+\theta}\theta}}^\frac{\theta}{1+\theta}
\expec{\Big|\fint_{B_{R^\mu}} \nabla \phi\Big|^{2q(1+\theta)}}^\frac1{1+\theta}\\
&\, \stackrel{\eqref{AlgebraicMomentEllipticity}}{\le} \,
& e^{c_{\theta,1} q^2
} \expec{\Big|\fint_{B_{R^\mu}} \nabla \phi\Big|^{2q(1+\theta)}}^\frac1{1+\theta}
\end{eqnarray*}
for a constant $c_{\theta,1}$. Then, by Proposition~\ref{prop:averages} applied to $g=|B_{R^\mu}|^{-1}\mathds 1_{B_{R^\mu}}$, we get
\begin{eqnarray*}
\expec{\Big|\fint_{B_{R^\mu}} \nabla \phi\Big|^{2q(1+\theta)}}^\frac1{q(1+\theta)}
&\le&  e^{c'_{\theta,2}q}\,\expec{\Big(\int_B a(|\nabla \phi|^2+1)\Big)^{q(1+\theta)^2}}^\frac1{q(1+\theta)^2} \int_{\mathbb{R}^d} |g|^2
\\
&=& e^{c'_{\theta,2}q}\,\expec{\Big(\int_B a(|\nabla \phi|^2+1)\Big)^{q(1+\theta)^2}}^\frac1{q(1+\theta)^2} R^{-d\mu},
\end{eqnarray*}
for some constant $c'_{\theta,2}$. It remains to control $\int_B a(|\nabla \phi|^2+1)$ by $\rc$, and we claim that $\int_B a(|\nabla \phi|^2+1)\lesssim \ra^d+\rc^{d-\beta}\ra^\beta$.
Indeed, if $\rc<\ra$, then 
\[
\int_B a(|\nabla \phi|^2+1)\le   \int_{B_\spadesuit} a(|\nabla \phi|^2+1) \lesssim 
 \int_{B_\spadesuit} a \lesssim \ra^d .
\]
Otherwise, $\rc \ge \ra$, and by the hole-filling estimate~\eqref{Lholefillingesti} in Corollary~\ref{Lholefilling},
\[
\int_B a(|\nabla \phi|^2+1)\lesssim \ra^d   \fint_{\Ba} a(|\nabla \phi|^2+1) \lesssim 
 \ra^d \big(\frac \rc \ra\big)^{d-\beta} \fint_{B_\spadesuit}  a(|\nabla \phi|^2+1)  \lesssim \rc^{d-\beta} \ra^\beta .
\]
All in all, this entails by H\"older's inequality with exponents $(\frac{1+\theta}\theta,1+\theta)$ and the moment bound \eqref{MomentNonSupEllRatio} on $\ra$,
\begin{equation*}
\expec{\Big|\fint_{B_{R^\mu}} \nabla \phi\Big|^{2q(1+\theta)}}^\frac1{q(1+\theta)}
\,\le \, e^{c_{\theta,2}q
} R^{-d\mu} \expec{\rc^{q(d-\beta)(1+\theta)^3}}^\frac1{q(1+\theta)^3}
\end{equation*}
for a constant $c_{\theta,2}$ and \eqref{e.Markov2} follows in combination with \eqref{e.Markov}.

\medskip

\step3 Buckling argument.
\\
By expressing the moments of $\rc$ using its level-sets we have for all $q\ge 1$,
and all $K\ge 1$,
\begin{eqnarray*}
\expec{\rc^{q(d-\frac\beta K)}}& \le& 1+\sum_{n=1}^\infty 2^{nq(d-\frac \beta K)} \expec{\mathds1(\rc=2^n)}
\\
&\stackrel{\eqref{e.Markov2}}\le & 1+   e^{c_{\theta}q^2} \expec{\rc^{q(d-\beta)(1+\theta)^3}}^\frac{1}{(1+\theta)^3}  
\\
&&\times \sum_{n=1}^\infty  (2^{-nq(\frac \beta K+2(1-\mu)-\beta-\e)}+ 2^{-nq(d(1-\mu)+\frac \beta K-\e)}) .
\end{eqnarray*}
We now choose our exponents.
We first fix $0<\mu<1$ so that $d(1-\mu)=\frac \beta2$, then set $\e:=\frac{\beta}{5d}$
and $\frac1K:=1-\frac1{5d}$ to the effect that
\[
 \frac12(2^{-nq(\frac \beta K+2(1-\mu)-\beta-\e)}+ 2^{-nq(d(1-\mu)+\frac \beta K-\e)}) \,\le \,2^{-nq \frac \beta{5d}}.
\]
With this choice, the series is summable and the above turns into 
\[
\expec{\rc^{q(d-\frac\beta K)}}\,  \le \,  1+  e^{cq^2} \expec{\rc^{(d-\beta)(1+\theta)^3q}}^\frac{1}{(1+\theta)^3}  .
\]
We may then absorb part of the right-hand side into the left-hand side by Young's inequality upon choosing $0<\theta <1$  so small but independent of $q$ that $(d-\beta)(1+\theta)^3 < d-\frac{\beta}{K}$ (which is possible since $K>1$), and the claimed moment bound {\color{red}\eqref{MomentBoundGradientCor}} follows.

\medskip

\step4 Control of $\nabla\sigma$. 
\\
We could control moment bounds of $\nabla\sigma$ by using Malliavin calculus and 
moment bounds on $\nabla \phi$. Here we directly apply standard Calder\'on-Zygmund estimates for the Laplacian (the scaling of the multiplicative constants follow from the Marcinkiewicz interpolation theorem \cite[Theorem $1.3.2$]{grafakos2008classical}, \cite{Marcinkiewicz}) to \eqref{GaugeEquation}: 
For all $R\geq 1$ and $q\geq 1$
\[
\Big(\fint_{B_R} \Big(\fint_{B(x)}\vert \nabla \sigma\vert^2\Big)^{q} \Big)^\frac1q \lesssim (q+q') \Big(\Big(\fint_{B_{2R}}\fint_{B(x)}\vert\nabla \sigma\vert^2\Big)^q+\fint_{B_{2R}}\Big(\fint_{B(x)}\vert a(\nabla\phi+e)\vert^2\Big)^q\Big)^\frac1q,
\]
By ergodicity and stationarity, as $R\uparrow +\infty$, each spatial average converges almost surely to the associated expectation, so that we derive 
\[
\expec{\Big(\fint_{B(0)}\vert \nabla \sigma\vert^2\Big)^{q}}^\frac1q \lesssim (q+q') \Big(\expec{\vert\nabla\sigma\vert^2}^q+\expec{\Big(\int_{B(0)}a^2(\vert \nabla\phi\vert^2+1)\Big)^q}\Big)^\frac1q.
\]
Finally, using 
\[
\int_{B(0)}a^2(\vert \nabla\phi\vert^2+1)\leq \Big(\sup_{B(0)} a\Big)\int_{B(0)} a(\vert\nabla\phi\vert^2+1),
\]
we obtain \eqref{Moment BoundFluxCor} from \eqref{MomentBoundGradientCor} and the local boundedness  of $a$ (see \eqref{e.a-bdd}).

\subsection{Arguments for Theorems~\ref{th:bd-corr} and~\ref{th:min-rad}}
We split the proof into two steps and we show separately Theorem \ref{th:bd-corr} and Theorem \ref{th:min-rad} using the standard strategy from \cite{GNO-reg,josien2020annealed}.

\medskip

\step1 Proof of Theorem \ref{th:bd-corr}. 
\\
Since $p_\diamond=d+1$, we have $\frac{p_\diamond-1}{2p_\diamond}>\frac{p_\diamond+1}{2p_\diamond}-\frac1d$, so that by Poincar\'e-Sobolev' inequality, 
\begin{eqnarray*}
\lefteqn{\Big(\fint_{B(x)} \Big|(\phi,\sigma)-\fint_{B(0)}(\phi,\sigma)\Big|^{\frac{2p_\diamond}{p_\diamond-1}}\Big)^\frac{p_\diamond-1}{2p_\diamond}}
\\
&\le&\Big|\fint_{B(x)}(\phi,\sigma)-\fint_{B(0)}(\phi,\sigma)\Big|+\Big(\fint_{B(x)} |(\phi,\sigma)-\fint_{B(x)}(\phi,\sigma)|^{\frac{2p_\diamond}{p_\diamond-1}}\Big)^\frac{p_\diamond-1}{2p_\diamond}
\\
&\lesssim&\Big|\fint_{B(x)}(\phi,\sigma)-\fint_{B(0)}(\phi,\sigma)\Big|+\Big(\fint_{B(x)} |\nabla (\phi,\sigma)|^{\frac{2p_\diamond}{p_\diamond+1}}\Big)^\frac{p_\diamond+1}{2p_\diamond}.
\end{eqnarray*}
Using H\"older's inequality, we have 
$$\Big(\fint_{B(x)} |\nabla (\phi,\sigma)|^{\frac{2p_\diamond}{p_\diamond+1}}\Big)^\frac{p_\diamond+1}{2p_\diamond}\leq \Big(\fint_{B(x)} a^{-p_\diamond}\Big)^{\frac{1}{2 p_{\diamond}}}\Big(\fint_{B(x)}a(\vert\nabla\phi\vert^2+1)\Big)^\frac{1}{2}+\Big(\fint_{B(x)}\vert \nabla\sigma\vert^2\Big)^{\frac{1}{2}},$$
that has the stochastic integrability \eqref{StoInteConstant} from Proposition \ref{prop:moment-en-dens} and \eqref{MomentA}. We then follow the standard proof to control $\Big|\fint_{B(x)}(\phi,\sigma)-\fint_{B(0)}(\phi,\sigma)\Big|$ in \cite{josien2020annealed}, and we show the argument for $\phi$ only (the proof for $\sigma$ follows the same way). We have the representation formula
\begin{equation}\label{RepresentationFormulaDiff}
\fint_{B(x)}\phi-\fint_{B(0)}\phi=\int_{\mathbb{R}^d}\nabla w\cdot \nabla \phi,
\end{equation}
where $w$ denotes the decaying solution of
\[-\triangle w=\frac{1}{\vert B(0)\vert}(\mathds{1}_{B(x)}-\mathds{1}_{B(0)}).\]
By classical potential theory, it holds
\[\Big(\int_{\mathbb{R}^d}\vert\nabla w\vert^2\Big)^{\frac{1}{2}}\lesssim \mu_d(|x|)=\left\{
\begin{array}{rcl}
\sqrt{|x|+1} &:&d=1,
\\
\log(|x|+2)^\frac12&:&d=2,
\\
1 &:&d>2.
\end{array}
\right.
\]
Since $w$ is deterministic, as a consequence of \eqref{RepresentationFormulaDiff} and \eqref{th:aver-corr-grad}, we finally obtain \eqref{MomentPhiItself}.

\medskip

\step2 Proof of Theorem \ref{th:min-rad}.
\\
By the layer-cake formula and since $r_\diamond$ is controlled in \eqref{MomentNonSupEllRatio}, it is sufficient to show that for any $\lambda>0$
\begin{equation}\label{TailProbaRstar}
\mathbb{P}\big(\{r_\star\geq \lambda\}\cap \{\lambda\geq r_\diamond\}\big)\le  e^{c q^2}\frac{\big(\mu_d(\lambda)\big)^q}{\lambda^q},
\end{equation}
for some $c>0$.
The estimate \eqref{TailProbaRstar} is simply obtained using the Definition \ref{DefRstarStatement} of $r_\star$ in form of 
$$\mathbb{P}\big(\{r_\star\geq \lambda\}\cap \{\lambda\geq r_\diamond\}\big)\leq \mathbb{P}\bigg(\Big(\fint_{B_{\frac{\lambda}{2}}}\Big\vert (\phi,\sigma)-\fint_{B_\frac{\lambda}{2}}(\phi,\sigma)\Big\vert^{\frac{2 p_\diamond}{p_\diamond-1}}\Big)^{\frac{p_\diamond-1}{2p_\diamond}}>\frac{\lambda}{2C}\bigg),$$
which, together with Markov's inequality and Theorem \ref{th:bd-corr}, gives \eqref{TailProbaRstar}.

\section{Large-scale regularity theory}\label{sec:LS}

\subsection{Large-scale Lipschitz estimates}
The proof is based on a post-processing of \cite[Theorem $2$]{bella2018liouville}. Since the proof is very similar to that of \cite[Corollary $3$]{GNO-reg}, we only highlight the main steps of the argument. The starting point is
\begin{equation}\label{ExcessDecayBFO}
\begin{aligned}
\sup_{r\in [\ru,R]} &\tfrac{1}{r^{2\alpha}}\mathrm{Exc}(\nabla u+g,r)\lesssim\tfrac{1}{R^{2\alpha}}\mathrm{Exc}(\nabla u+g,R)\\
&+\sup_{r\in [\ru,R]}\tfrac{1}{r^{2\alpha}}\fint_{B_r}\Big(\big(g-\fint_{B_r} g\big)\cdot a\big(g-\fint_{B_r} g\big)+\big(h-\fint_{B_r} h\big)\cdot a^{-1}\big(h-\fint_{B_r} h\big)\Big),
\end{aligned}
\end{equation}
where the excess $\mathrm{Exc}$ is defined for any $\rho>0$ by
$$\mathrm{Exc}(\nabla u+g,\rho):=\inf_{\xi\in\mathbb{R}^d}\fint_{B_\rho}\big(\nabla u-(\xi+\nabla\phi_\xi)\big)\cdot a\big(\nabla u-(\xi+\nabla\phi_\xi)\big).$$
The estimate \eqref{ExcessDecayBFO} can be obtained following the lines of \cite[Step $1$ p. 135]{GNO-reg} that is based on energy estimates and the excess decay \cite[Theorem $2$]{bella2018liouville}. We can then post-process \eqref{ExcessDecayBFO} following the lines of \cite[Step $2$ p. 136]{GNO-reg} where the additional main ingredient is the non-degeneracy of the correctors which reads : for any $\rho\geq \ru$
\begin{equation}\label{NonDegeneracyCorrector}
\fint_{B_\rho}\big(\nabla\phi_\xi+\xi\big)\cdot a\big(\nabla\phi_\xi+\xi\big)\gtrsim \vert \xi\vert^2.
\end{equation}
To see \eqref{NonDegeneracyCorrector}, first note that from the Definition \ref{prop:mombd-ra} of $r_\diamond$ we have by H\"older and Poincar\'e's inequalities
$$\Big(\fint_{B_\rho}\big(\nabla\phi_\xi+\xi\big)\cdot a\big(\nabla\phi_\xi+\xi\big)\Big)^{\frac{1}{2}}\gtrsim \frac{1}{\rho}\Big(\fint_{B_\rho}\Big\vert\phi_\xi+\xi\cdot x-\fint_{B_\rho}\phi_\xi\Big\vert^{\frac{2p_\diamond}{p_\diamond+1}}\Big)^{\frac{p_\diamond+1}{2 p_\diamond}}.$$
Then, using the triangle inequality in form of 
$$\frac{1}{\rho}\Big(\fint_{B_\rho}\Big\vert\phi_\xi+\xi\cdot x-\fint_{B_\rho}\phi_\xi\Big\vert^{\frac{2p_\diamond}{p_\diamond+1}}\Big)^{\frac{p_\diamond+1}{2 p_\diamond}}\geq \Big\vert  C_d\vert \xi\vert- \frac{1}{\rho}\Big(\fint_{B_\rho}\Big\vert\phi_\xi-\fint_{B_\rho}\phi_\xi\Big\vert^{\frac{2p_\diamond}{p_\diamond+1}}\Big)^{\frac{p_\diamond+1}{2 p_\diamond}}\Big\vert,$$
for some $C_d>0$, together with the definition of $\ru$ in form of 
$$\frac{1}{\rho}\Big(\fint_{B_\rho}\Big\vert\phi_\xi-\fint_{B_\rho}\phi_\xi\Big\vert^{\frac{2p_\diamond}{p_\diamond+1}}\Big)^{\frac{p_\diamond+1}{2 p_\diamond}}\leq\frac{1}{\rho}\Big(\fint_{B_\rho}\Big\vert\phi_\xi-\fint_{B_\rho}\phi_\xi\Big\vert^{\frac{2p_\diamond}{p_\diamond-1}}\Big)^{\frac{p_\diamond-1}{2 p_\diamond}} \leq \frac{\vert \xi\vert}{C},$$
we obtain \eqref{NonDegeneracyCorrector} (up to increasing the value of $C$).

\subsection{Quenched and annealed Calder\'on-Zygmund estimates}
The proof of Theorem \ref{th:quenchedCZ} is based on the Schauder regularity theory in Proposition \ref{SchauderTheoryLarge} and a refined dual version of the Calder\'on-Zygmund lemma due to Shen \cite[Theorem $3.2$]{Shen-07}. We then obtain Theorem \ref{th:annealedCZ} as a post-processing of Theorem \ref{th:quenchedCZ} using the moment bound on $\ru$ in Theorem \ref{th:min-rad}. Since the proofs of Theorem \ref{th:quenchedCZ} and Theorem \ref{th:annealedCZ} follow almost line by line the proofs of \cite[Proposition $6.4$]{DO-20} and \cite[Theorem $6.1$]{DO-20} respectively, we leave the details to the reader.

\appendix

\section{Proof of Proposition~\ref{prop:mombd-ra}}

\subsection{A result on stochastic integrability}

We start by showing the following lemma on the characterization of super-algebraic stochastic integrability appearing in Proposition~\ref{prop:mombd-ra}:
\begin{lem0}\label{ap: sto_int}
Let $\alpha>1$. Given a positive random variable $X$, the following statements are equivalent:
\begin{enumerate}
\item\label{ap: sto_int-exp} 
There exists a constant $C>0$ such that
\begin{equation*}
\expec{\exp(\frac 1C \log^\alpha(1+X))} < +\infty.
\end{equation*}
\item\label{ap: sto_int-tail}
 There exists a constant $C>0$ such that for $x$ large enough,
\begin{equation*}
\Pm(X\geq x) \,\le\, \exp(-\frac 1C \log^\alpha(1+x)).
\end{equation*}
\item\label{ap: sto_int-mmt}
 There exists a constant $C>0$ such that for $p$ large enough,
\begin{equation*}
\expec{X^p}\,\le\, \exp(Cp^{\frac{\alpha}{\alpha-1}}).
\end{equation*}
\end{enumerate}
Here the constants $C$ might be different. In particular, if $\alpha=2$, all the exponents are $2$.
\end{lem0}
We prove \eqref{ap: sto_int-exp} $\Rightarrow$ \eqref{ap: sto_int-tail}$\Rightarrow$ \eqref{ap: sto_int-mmt}$\Rightarrow$ \eqref{ap: sto_int-exp}.
\\
\eqref{ap: sto_int-exp} $\Rightarrow$ \eqref{ap: sto_int-tail}: Direct application of Chebychev's inequality.
\\
\eqref{ap: sto_int-tail} $\Rightarrow$ \eqref{ap: sto_int-mmt}: For $A>0$ large enough to be chosen later, we have by a change of variable
\begin{eqnarray*}
\expec{X^p}  & \le &  A^p+p\int_A^{+\infty} \Pm(X>x) x^{p-1}dx\\
& \lesssim &  A^p + p\int_A^{+\infty} \exp\left(-\frac 1C \log^\alpha(1+x)\right) x^{p-1} dx\\
& \lesssim & A^p +p\int_{\log^\alpha(1+A)}^{+\infty}e^{-\frac 1C t} e^{p t^{\frac 1\alpha}}dt.
\end{eqnarray*}
We then choose $A$ so large that for any $t>A$ one has $-\frac1C t+pt^{\frac1\alpha}<-\frac 1{2C} t$. Since $\alpha>1$, we can take  $A=e^{(2Cp)^{\frac1{\alpha-1}}}$. Hence by changing the constant $C$,
\begin{eqnarray*}
\expec{X^p}  & \lesssim &  A^p+pC \,\le\,e^{C p^{\frac\alpha{\alpha-1}}}.
\end{eqnarray*}
\\
\eqref{ap: sto_int-mmt} $\Rightarrow$ \eqref{ap: sto_int-exp}: Denote by $C'$ the constant in \eqref{ap: sto_int-mmt}. By Chebychev's inequality with power $p_n$ yet to be chosen,
\begin{eqnarray*}
\expec{\exp(\frac 1C \log^\alpha(1+X))} & = & \int_0^{+\infty} \Pm(\exp(\frac 1C \log^\alpha(1+X))>t) dt\\
& = & \int_0^1 1 dt +\left(\sum_{n\in\N}\int_{e^n}^{e^{n+1}}\right)\Pm(X>\exp\left((C\log t)^{\frac 1\alpha}\right))dt\\
& \le& 1+ \sum_{n\in\N}\int_{e^n}^{e^{n+1}}\Pm(X>\exp\left((C\log t)^{\frac 1\alpha}\right))dt\\
& \le & 1+ \sum_{n\in\N}e^{n+1}\exp\left(-C^{\frac1\alpha} p_n n^{\frac 1\alpha}\right) \exp(C'p_n^{\frac \alpha{\alpha-1}}).
\end{eqnarray*}
With $p_n=n^{1-\frac1\alpha}$ and $C$ large enough, the sum converges, and the claim follows.

\subsection{Proof of Proposition~\ref{prop:mombd-ra}}

We split the proof into two steps. We first estimate $\ra$, and then turn to $\rs$.

\medskip

\step{1} Estimate of $\ra$.\\
We first introduce a stationary random radius $\tra$ defined by 
\begin{multline*}
\tra(x):= \inf_{r \ge 1 \text{ dyadic}} \Big\{\forall \rho \ge r \text{ dyadic}, \, 
\\
\tfrac1{C_d}\expec{a^{p_\diamond}+a^{-p_\diamond}} \le\fint_{B_r(x)} a^{p_\diamond}+a^{-p_\diamond} \le C_d \expec{a^{p_\diamond}+a^{-p_\diamond}}\Big\},
\end{multline*}
where $C_d$ will be chosen later.
For all $r>0$, we set 
\begin{equation*}
X_r\colon=\fint_{B_r} a^{\pa}+a^{-\pa}.
\end{equation*}
The Malliavin derivative of $X_r$ is $D X_r= \pa |B_r|^{-1}(a^{\pa}+a^{-\pa})\mathds{1}_{B_r}$. Hence the moment bound for all $q\ge 1$
\begin{equation}\label{ap:sto_int-hyp}
\expec{|X-\E X|^q}^{\frac 1q} \,\lesssim\, \sqrt{q}\expec{\|DX\|_{L^2(\R^d)}^{q}}^{\frac 1q}
\end{equation}
(which is a consequence of the logarithmic-Sobolev inequality) 
combined with Minkowski's inequality implies
\begin{equation*}
\expec{|X_r-\E X_r|^q}^{\frac 1q} \,\lesssim\, \sqrt{q}\expec{\|DX\|_{L^2(\R^d)}^{q}} ^{\frac 1q}\,\lesssim\, \sqrt{q} |B_r|^{-\frac 12} \expec{a^{\pa q}}^{\frac 1q}.
\end{equation*}
By definition of the random field $a$, $\expec{a^p}=\exp(\frac{\mathcal{C}(0)}{2}p^2)$,
so for suitable positive constants $c$ and $C$ (depending on $\pa$),
\begin{equation*}
\expec{|X_r-\E X_r|^q} \,\le\, C^q  q^q |B_r|^{-\frac q2} \exp(\frac12 \pa^2 q^2)\,\le\, r^{-\frac d2 q}\exp(cq^2).
\end{equation*}
Since the definition of $\tra$ only involves dyadic radii, we have
\begin{equation*}
\Pm(\tra>x)\le \sum_{r=2^n>x} \Pm\left(|X_r-\E X_r|>(1-C_d^{-1}) \E X_r\right).
\end{equation*}
By applying Chebychev's inequality with power $q_n=\frac{d \log2 }{4c}n$ to each term in the sum, we obtain
\begin{equation*}
\Pm(\tra>x)\,\lesssim\, \sum_{n>\log_2 x} 2^{-\frac d2 nq_n} \exp(cq_n^2)\,\lesssim\, \sum_{n>\log_2 x} \exp(-c n^2)\,\le\, e^{-c\log^2x},
\end{equation*}
here the constant $c$ varies but only depends on $d$ and $\pa$. This implies the claimed stochastic integrability of $\tra$ by Lemma~\ref{ap: sto_int}.

To pass from dyadic radii to general radii, it  suffices to multiply $\tra$ by $2$. In fact, for a radius $r\in [r_0,2r_0]$,
\begin{equation*}
 \fint_{B_{r}} \left|a^\pa + a^{-\pa}-\expec{ a^\pa + a^{-\pa}}\right|\le 2^d\fint_{B_{r_0}} \left|a^\pa + a^{-\pa}-\expec{ a^\pa + a^{-\pa}}\right|
\end{equation*}
As a result, for any $r>2\tra\ge 2^{\lceil \log_2 \tra\rceil}$, by comparing the integral with the average on the ball with the nearest dyadic radius (larger than $\ra$), 
\begin{equation}\label{ap: non_dyadic_radii}
\fint_{B_{r}} \left|a^\pa + a^{-\pa}-\expec{ a^\pa + a^{-\pa}}\right| \le (1-C_d^{-1})2^d  \fint_{B_{r}} \left|a^\pa + a^{-\pa}-\expec{ a^\pa + a^{-\pa}}\right| .
\end{equation}

We then pick 
\begin{equation*}
\underline{r}_{\diamond}(x,\epsilon)= \inf_{y} \tra(y,\epsilon)+\frac 18 |x-y|,
\end{equation*}
with
\begin{equation*}
\tilde r_{\diamond}(x,\epsilon):= \inf \left\{r\colon \forall \rho > r , \fint_{B_{\rho}(x)} \left| (a^\pa + a^{-\pa})-\expec{ a^\pa + a^{-\pa}}\right| \le \epsilon \expec{ a^\pa + a^{-\pa}} \right\}.
\end{equation*}
 By  construction, $\ura$ is the maximal $\frac 18$-Lipschitz field smaller than $\tra$, so $\ura(x,\epsilon)\le\tra(x,\epsilon)$. If $\ura(x,\epsilon)\le R$, by definition there is a $y\in \R^d$ such that $|y-x|\leq 8R$ and $\tra(y,\epsilon)\le R$. This implies that for any $\rho>R$, $B_{\rho}(x)\subset B_{9\rho}(y)$. Thus
\begin{eqnarray*}
\fint_{B_{\rho}(x)} \left| (a^\pa + a^{-\pa})-\expec{ a^\pa + a^{-\pa}}\right| & \le & 9^d  \fint_{B_{9\rho}(x)} \left| (a^\pa + a^{-\pa})-\expec{ a^\pa + a^{-\pa}}\right|\\
& \le & 9^d\epsilon \expec{ a^\pa + a^{-\pa}} .
\end{eqnarray*}
Again by definition, $\tra(x,9^{d}\epsilon)\leq R$, which yields $\tra\left(x,9^{d}\epsilon\right)\le \ura(x,\epsilon).$ So if we define $\ra$ to be the minimal $\frac 18$-Lipschitz random field with the desired large scale regularity property, since $\tra(x,\frac12) \le \ura(x,\frac12 9^{-d})$ and the latter one is $\frac 18$-Lipschitz, by minimality
\begin{equation*}
\ra(x) \le \ura(x, \frac12 9^{-d}) \le \tra(x,\frac12 9^{-d}).
\end{equation*}
As a result, in  view of \eqref{ap: non_dyadic_radii}, if we pick $C_d$ such that $(1-C_d^{-1}) 2^d=\frac12 9^{-d}$, $\ra$ will have the desired stochastic integrability.

\medskip

\step{2} Estimate of $\rs$.\\
Since the laws of $a$ and $a^{-1}$ are the same, it is enough to treat positive powers of $a$.
We first claim that it suffices to control $\rs(1)$. In fact, if 
\begin{equation*}
\expec{\exp\left(\frac 1{C} \log^2(1+\rs(1))\right)} < +\infty
\end{equation*}
holds, then since one can cover $B_R$ with $c_d R^d$ balls $B_i$ of radius $1$,  by stationarity of $a$
\begin{eqnarray*}
\Pm(\rs(R)>r) & \leq & \Pm\left(\exists 1 \le i \le c_d R^d \textup{ s.t. } \sup_{B_i} a >R^{\varepsilon}r\right)\\
& \le & \sum_{i=1}^{c_d R^d}\Pm\left( \sup_{B_i} a >R^{\varepsilon}r\right)\\
& \lesssim & c_d R^d  \exp(-\frac 1C \log^2(1+R^\varepsilon r))\\
& \lesssim &c_d R^d  \exp(-\frac 1C \log ^2(R^\varepsilon)) \exp(-\frac 1C \log^2(1+ r))
\\
&\lesssim & \exp(-\frac 1C \log^2(1+ r))
\end{eqnarray*}
since $R \mapsto R^d  \exp(-\frac 1C \log ^2(R^\varepsilon))$ is bounded on $\R_+$. This entails the claimed stochastic integrability of $\rs(R)$ by Lemma~\ref{ap: sto_int}.

\medskip

We now estimate $\rs(1)$. Under the hypothesis that the covariance function $\Cc$ is $2\gamma$-H\"older continuous at $0$, we have the following regularity estimate: for a constant $c$, and all $q\ge 1$,
\begin{equation}\label{HolderregularityCoef} 
\expec{|a(x)-a(y)|^q}\, \lesssim \, |x-y|^{\gamma q} e^{cq^2}.
\end{equation}
Indeed, by the triangle inequality and the growth of moments of Gaussian variables,
\begin{eqnarray*}
\expec{|a(x)-a(y)|^q}^{\frac 1q}& = & \expec{e^{qG(y)}\left|1-e^{G(x)-G(y)}\right|^q}^{\frac 1q}\\
& \le & \expec{e^{2qG(y)}}^{\frac 1{2q}}\left(\sum_{n=1}^{+\infty} \frac {1}{n!}\expec{|G(x)-G(y)|^{2nq}}^{\frac 1{2q}}\right)\\
& \, \lesssim \, & e^{q}\sum_{n=1}^{+\infty} \frac {1}{n!}(2nq)^{\frac n2}\expec{|G(x)-G(y)|^{2}}^{\frac n{2}}\\
& \le & e^q\sum_{n=1}^{+\infty} ([\frac n2]!)^{-1}(4q)^{\frac n2}(2-2\Cc(x-y))^{\frac n{2}}\\
& \, \lesssim \,  & e^q\sum_{n=1}^{+\infty} (n!)^{-1}(c q)^{n}|x-y|^{n\gamma}\leq e^{cq} |x-y|^{\gamma},
\end{eqnarray*}
where $c$ is a constant only depending on the H\"older continuity of $\Cc$. 
\\
It remains to control $\rs(1)$ by a Kolmogorov criterion-type argument.
We now work in the unit cube $Q_1$ for convenience. Denote $\left((Q^n_k)_{,1\le k \le 2^{nd}}\right)_{n\in\N}$ the family of coverings of $Q_1$ by disjoint dyadic cubes of side length $2^{-n}$ such that $Q_1=Q^0_1$. For $Q_{1},Q_{2}\in (Q_k^n)_{n\in\N,1\le k \le 2^n}$ with $Q_1\subset Q_2$, by \eqref{HolderregularityCoef} and the Minkowski inequality we have
\begin{equation*}
\expec{\left|\fint_{Q_1}a-\fint_{Q_2}a\right|^q}^{\frac 1q} \le \expec{\left|\fint_{Q_1}\fint_{Q_2}|a(x)-a(y)|dxdy\right|^q}^{\frac 1q}\, \lesssim \, e^{cq}(\diam Q_2)^\gamma,
\end{equation*}
which implies, by replacing the first $\sup$ by a sum,
\begin{equation}\label{e.app-Kolmo}
\expec{\sup_{Q^n_\cdot}\sup_{k\colon Q^{n+1}_k\subset Q^n_{\cdot}}\left|\fint_{Q^n_\cdot}a-\fint_{Q^{n+1}_k}a\right|^q}^{\frac 1q}  \, \lesssim \,   e^{cq}2^{-n(\gamma-\frac dq)}.
\end{equation}
For all $n$, approximate $a$ by its local averages $a_n$ at scale $2^{-n}$, that is,  
\begin{equation*}
a_n :=  \sum_{k=1}^{2^n} \mathds{1}_{Q^n_k} \fint_{Q^n_k}a .
\end{equation*}
For $q$ such that  $\gamma-\frac dq>0$, \eqref{e.app-Kolmo} implies that $a_n$ is a Cauchy sequence in $L^q(d\mathbb P,L^\infty(Q_1))$. Since $a_n$ converges to $a$ almost surely almost everywhere by the Lebesgue differentiation theorem, we obtain
\begin{eqnarray}
\expec{\left(\sup_{Q_1} a\right)^q}^{\frac 1q} & \le & \sum_{n=1}^{\infty}\expec{\sup_{Q_1}|a_n-a_{n+1}|^q}^{\frac 1q} + \expec{\sup_{Q_1}|a_0|^q}^{\frac 1q}\nonumber \\
&\,\lesssim \,& \sum_{n=0}^{+\infty} e^{cq}2^{-n(\gamma-\frac dq)}+\expec{\left|\fint_{Q_1}a\right|^q}^{\frac1q} \, \le \, e^{cq}.\label{e.a-bdd}
\end{eqnarray}
For all $q>\frac {2d}\gamma$, this yields a moment bound, which, according to Lemma~\ref{ap: sto_int}, entails the claimed stochastic integrability of $\rs(1)$. 

\section*{Acknowledgements}

NC has received funding from the European Research Council (ERC) under the European Union's Horizon 2020 research and innovation programme (Grant agreement No 948819). The two last authors acknowledge support from the European Research Council (ERC) under the European Union's Horizon 2020 research and innovation programme (Grant Agreement No 864066).


\def\cprime{$'$}

\end{document}